\documentclass[11pt,letterpaper]{amsart}
\usepackage{amsmath}
\usepackage{amssymb}
\usepackage{amsthm}
\usepackage{hyperref}
\usepackage[noabbrev,capitalize]{cleveref}
\usepackage{mathrsfs}
\usepackage{mathtools}
\usepackage{slashed}
\usepackage{tikz-cd}
\usepackage[shortlabels]{enumitem}

\setenumerate{label=(\roman*)}

\DeclareMathOperator{\Ric}{Ric}

\def\sideremark#1{\ifvmode\leavevmode\fi\vadjust{\vbox to0pt{\vss
 \hbox to 0pt{\hskip\hsize\hskip1em
 \vbox{\hsize3cm\tiny\raggedright\pretolerance10000
 \noindent #1\hfill}\hss}\vbox to8pt{\vfil}\vss}}}

\newtheorem{theorem}{Theorem}[section]
\newtheorem{proposition}[theorem]{Proposition}
\newtheorem{lemma}[theorem]{Lemma}

\theoremstyle{definition}
\newtheorem{definition}[theorem]{Definition}

\theoremstyle{remark}
\newtheorem{remark}[theorem]{Remark}

\numberwithin{equation}{section}
\usepackage{ stmaryrd }

\allowdisplaybreaks
\makeatletter
\@namedef{subjclassname@2020}{\textup{2020} Mathematics Subject Classification}
\makeatother

\begin{document}
\title{Spectral splitting theorem and ends of minimal hypersurfaces}
\author{Han Hong}
\address{Department of Mathematics and statistics \\ Beijing Jiaotong University \\ Beijing \\ China, 100044}
\email{hanhong@bjtu.edu.cn}

\author{Gaoming Wang}
\address{Beijing Institute of Mathematical Sciences and Applications \\ Beijing \\ China, 100044}
\email{wanggaoming@bimsa.cn}
\date{}
\maketitle

\begin{abstract}
    In this paper, we give a new proof of the splitting theorem on manifolds with nonnegative spectral Ricci curvature proved in \cite{antonelli-pozzetta-xu,catino-Mari-Mastrolia-Roncoroni,hong-wang-jfa}. Furthermore, by constructing weighted minimizing geodesics near infinity, we show that minimal hypersurfaces with finite index in $n$-manifolds, $n\leq 5$, with nonnegative biRic curvature must have finite ends, generalizing a result of Li--Wang \cite{Li-Wang-nonnegatively-curved-manifold} on manifolds with nonnegative sectional curvature.
\end{abstract}

\section{Introduction}

Let $(M,g)$ be a noncompact $n$-dimensional Riemannian manifold, and denote by $\eta$ the outward unit normal vector along $\partial M$ if $M$ has nonempty boundary. Define the Lipschitz function $\Ric : M \to \mathbb{R}$ by
\[
\Ric(x)=\inf\{\Ric(v,v): g(v,v)=1,\ v\in T_xM\}.
\]
Given a constant $\alpha\geq 0$, we say that $M$ has \emph{nonnegative $\alpha$-Ricci curvature in the spectral sense} or \emph{nonnegative spectral $\alpha$-Ricci curvature} if
\[
\lambda_1(-\alpha\Delta+\Ric)\geq 0.
\]

Equivalently,
\[
\int_M \alpha |\nabla \varphi|^2+\Ric\,\varphi^2\geq 0
\]
for every compactly supported function $\varphi$. By standard elliptic theory, this is also equivalent to the existence of a positive function $u\in C^{2,\beta}(M)$ satisfying
\begin{equation}\label{equation 1 of u}
-\alpha\Delta u+\Ric\,u=0
\quad\text{on }M.
\end{equation}
If $M$ has nonempty boundary, we additionally have
\begin{equation}\label{equation 2 of u}
\frac{\partial u}{\partial \eta}=0
\quad\text{on }\partial M.
\end{equation}

Spectral Ricci curvature has attracted considerable attention in recent years. Among the main developments are spectral versions of the Bonnet--Myers theorem and the Bishop--Gromov volume comparison theorem \cite{antonelli-xu}, as well as spectral splitting theorems \cite{antonelli-pozzetta-xu,catino-Mari-Mastrolia-Roncoroni,hong-wang-jfa}.

\begin{theorem}[{\cite{antonelli-pozzetta-xu,catino-Mari-Mastrolia-Roncoroni}}]\label{antonelli}
Let $n\geq 2$, let $\alpha<\frac{4}{n-1}$, and let $(M^n,g)$ be a complete noncompact Riemannian manifold without boundary satisfying
\[
\lambda_1(-\alpha\Delta+\Ric)\geq 0.
\]
If $M$ has at least two ends, then $\Ric\geq 0$ on $M$, and consequently $M$ splits isometrically as $N\times \mathbb{R}$ where $N$ is a closed manifold with nonnegative Ricci curvature.
\end{theorem}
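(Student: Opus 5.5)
The plan is to use a weighted (conformal-type) length functional built from the positive solution $u$ of \eqref{equation 1 of u}, and to mimic the classical Cheeger--Gromoll line argument. Since $M$ has at least two ends, we first construct a line, i.e.\ a curve minimizing between the two ends. We do not use the Riemannian length here. Instead we use the weighted length
\[
L_u(\gamma)=\int_\gamma u^{k}\,ds
\]
for a suitable exponent $k$, to be fixed depending on $\alpha$ and $n$. Take points $p_i,q_i$ going to infinity in two different ends, and let $\gamma_i$ be $L_u$-minimizing curves between them. Each $\gamma_i$ must cross a fixed compact set $K$ separating the ends. Passing to a limit requires local compactness of $L_u$-minimizers, which holds since $u$ is positive and $C^{2,\beta}$. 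The output of this step is a complete curve $\gamma:\mathbb{R}\to M$ that minimizes $L_u$ on every compact subinterval.

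Next, we write down the second variation of $L_u$ along $\gamma$ in directions $\varphi e$, where $e$ runs over a parallel orthonormal frame normal to $\gamma$ and we sum over the $n-1$ directions. This gives an inequality of the form
\[
0\le \int_\gamma u^k\Big[(n-1)\varphi'^2-\varphi^2\big(\Ric(\gamma',\gamma')-k\,u^{-1}\Delta u+k u^{-1}\nabla^2u(\gamma',\gamma')+c\,k^2u^{-2}|\nabla^\perp u|^2\big)\Big]ds .
\]
We then substitute $\Delta u=\alpha^{-1}\Ric\,u\le \alpha^{-1}\Ric(\gamma',\gamma')u$. Along $\gamma$ we have $\nabla^2u(\gamma',\gamma')=(u\circ\gamma)''$. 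Writing $f=\ln u\circ\gamma$, we reparametrize and integrate by parts so that everything becomes a one-dimensional quadratic form in $\varphi$ and $f'$, with a coefficient $(1-k/\alpha)$ or similar in front of $\Ric$. The choice of $k$ together with Cauchy--Schwarz/AM--GM must absorb the $f'^2$ and $\varphi' f'$ cross terms. This is exactly where the restriction $\alpha<\frac{4}{n-1}$ should enter, as positivity of a discriminant. What remains should be $\int \varphi^2\,\Ric\,(\ldots)\le C\int\varphi'^2(\ldots)$ with a positive coefficient on the left. Taking logarithmic cutoffs $\varphi$ on longer and longer intervals makes the right side tend to zero. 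We would then conclude that $\Ric\equiv0$ along $\gamma$ and that $f'$ vanishes, so $u$ is constant along $\gamma$. We expect this algebraic step, tuning $k$ and handling the weighted measure together with the cutoff, to be the main obstacle. The delicate point is proving that the cutoff error actually tends to zero, which requires controlling $\int u^k$ along the line; if needed, we would use arc length with respect to the weighted metric $u^{2k/(\dots)}g$ instead.

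Finally, we need $\Ric\ge0$ everywhere, not just along one line. Our first attempt is a rigidity argument. Equality in the above inequalities forces $\nabla u\perp$ and $u$ constant along $\gamma$. We would then show that the set where the $L_u$-line passes is open, by using lines through nearby points, which are obtained from the same limiting procedure with base points perturbed. Alternatively, we could consider the weighted Busemann functions $b_\pm$ associated with the two ends of $\gamma$. The same second-variation computation shows that $b_++b_-$ is weighted-superharmonic with a maximum on $\gamma$, and the strong maximum principle then gives $b_++b_-\equiv0$. So every point lies on such a line, and the pointwise conclusion $\Ric(x)\ge0$ holds everywhere. Once $\Ric\ge0$, the equation $-\alpha\Delta u+\Ric\,u=0$ makes $u$ superharmonic. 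The classical Cheeger--Gromoll splitting theorem applies since $M$ has two ends and hence a line, giving $M=N\times\mathbb{R}$. Compactness of $N$ follows from $M$ having two ends.
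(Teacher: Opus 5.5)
\textbf{The gap is in how you plan to extract $\Ric\ge0$ from the line.} The second variation along a single weighted line cannot give $\Ric\equiv0$ on $\gamma$. An inequality $\int\varphi^2\Ric\,(\cdots)\le C\int\varphi'^2(\cdots)\to0$ with a positive coefficient in front of $\Ric$ only bounds $\Ric$ from above. For $k\neq\alpha$ the leftover $(1-k/\alpha)\Ric$ has no sign and cannot be absorbed. So there is no exponent to tune: you must take $k=\alpha$, so that the curvature terms combine into $\Ric(\gamma',\gamma')-\alpha u^{-1}\Delta u=\Ric(\gamma',\gamma')-\Ric\ge0$.

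The substitution $\varphi=\psi u^{-\alpha/2}$ then turns the weighted form into an unweighted one in $ds$. In it, $\psi^2(\log u)_s^2$ has coefficient $\alpha\bigl(1-\frac{n-1}{4}\alpha\bigr)$, and the cross term $\alpha(n-3)\psi\psi_s(\log u)_s$ can be absorbed exactly when $\alpha<\frac{4}{n-1}$. Linear cutoffs in $g$-arclength then make the error $C\int\psi_s^2$ tend to zero, with no control of $\int u^k$ needed.

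What this yields is $\nabla u=0$ and $\Ric(\gamma',\gamma')=\Ric=\alpha u^{-1}\Delta u$ along $\gamma$. It says nothing about the sign of $\Ric$, since $\Delta u$ need not vanish on $\gamma$. So ``every point lies on such a line, hence $\Ric\ge0$'' does not follow from what you claimed. It can be repaired: if every point lies on a weighted line, then $\nabla u\equiv0$ on $M$, so $u$ is constant, and $-\alpha\Delta u+\Ric\,u=0$ forces the function $\Ric$ to vanish, i.e.\ $\Ric\ge0$. Your first alternative (openness via perturbed base points) does not produce a line through a prescribed nearby point and should be dropped.

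The Busemann step is also not ``the same second-variation computation.'' Compactly supported variations along $\gamma$ say nothing about $\Delta b_\pm$ elsewhere. What you need is the paper's key lemma: the spectral Laplacian comparison $\Delta_qL_u^\alpha(p,q)\le Cu^\alpha(q)/d(p,q)$ in the viscosity sense. It is proved by varying a minimizer from $p$ to $q$ with fields $\phi e_i$, $\phi(0)=0$, $\phi(l)=1$, the same substitution $\phi=\psi u^{-\alpha/2}$, and $\psi=u^{\alpha/2}(q)s/l$.

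Mind the sign. With $b^+(x)=\lim_{t\to\infty}[L_u^\alpha(\gamma(t),\gamma(0))-L_u^\alpha(\gamma(t),x)]$ one gets $\Delta b^\pm\ge0$ and $b^++b^-\le0$ with equality on $\gamma$, which is what the strong maximum principle needs. A superharmonic function attaining an interior maximum gives nothing.

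Passing from $b^++b^-\equiv0$ to a line through every point requires weighted asymptotic rays from every point. This is the same input the paper uses, implicitly, for $|\nabla b^+|=u^\alpha$. Also, $L_u^\alpha$-minimizers between two given points need not exist when $u^{2\alpha}g$ is incomplete, which is why the paper enlarges $u$ near infinity in Proposition~\ref{existence of weighted minimizing line}.

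Once $b^\pm$ are harmonic, the paper finishes differently. Using $|\nabla b^+|=u^\alpha$, $\Ric(\nabla b^+,\nabla b^+)\ge\Ric\,u^{2\alpha}$ and $\alpha\Delta u=\Ric\,u$, Bochner gives $|\nabla^2b^+|^2\le\frac{2\alpha-1}{\alpha}|\nabla|\nabla b^+||^2$. Kato's inequality $|\nabla|\nabla b^+||^2\le\frac{n-1}{n}|\nabla^2b^+|^2$ then forces $\nabla^2b^+=0$ and $u$ constant for $\alpha<\frac{n-1}{n-2}$. This gives the splitting directly, without re-running the line rigidity at every point.
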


Although their work appeared on the same day, Antonelli--Pozzetta--Xu proved Theorem \ref{antonelli} using the theory of $\mu$-bubbles, while Catino--Mari--Mastrolia--Roncoroni employed the critical theory of Schrödinger operators. Moreover, Antonelli--Pozzetta--Xu showed, by constructing a counterexample, that the assumption of having at least two ends cannot be replaced by the existence of a minimizing line. Later, the authors applied the second variation formula for weighted length together with Liu's deformation argument (see \cite{liugang}) to establish the following boundary analogue.

\begin{theorem}[{\cite{hong-wang-jfa}}]\label{hongwang}
Let $(M^n,g)$ be a smooth noncompact Riemannian manifold with mean-convex boundary and nonnegative $\alpha$-Ricci curvature in the spectral sense. Assume that $\alpha<\frac{4}{n-1}$ and that there exists a free-boundary weighted minimizing ray with respect to the weighted length functional $\int u^\alpha$. Then $\Ric\geq 0$ on $M$, and $M$ splits off an $\mathbb{R}_+$ factor.
\end{theorem}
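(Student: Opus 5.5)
The plan is to run a second-variation argument for the weighted length $L_u(\gamma)=\int_\gamma u^\alpha\,ds$ along the given free-boundary weighted minimizing ray $\gamma:[0,\infty)\to M$, where $\gamma(0)\in\partial M$ and $\gamma'(0)=-\eta$. Here $u>0$ solves \eqref{equation 1 of u} and \eqref{equation 2 of u}.

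\emph{Step 1: the stability inequality.} Take an orthonormal parallel frame $\{e_i\}_{i=1}^{n-1}$ normal to $\gamma$ and variations $\varphi e_i$ with $\varphi$ compactly supported on $[0,\infty)$; we do not require $\varphi(0)=0$. Summing the second variations over $i$ gives
\[
\int_0^\infty u^\alpha\Big[(n-1)\varphi'^2-\varphi^2\Ric(\gamma',\gamma')+\alpha\varphi^2\big(u^{-1}(\Delta u-\nabla^2u(\gamma',\gamma'))-\alpha u^{-2}|\nabla^\perp u|^2\big)+\dots\Big]ds-\varphi(0)^2u^\alpha H_{\partial M}\ge0 .
\]
The boundary term has a favorable sign because $\partial M$ is mean convex. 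The Neumann condition kills the normal-derivative boundary contribution.

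\emph{Step 2: substitute the equation.} Use $\alpha\Delta u=\Ric\cdot u\le \Ric(\gamma',\gamma')u$. Rewrite $\nabla^2u(\gamma',\gamma')=(u\circ\gamma)''$, and absorb the normal gradient term with its good sign. This yields a one-dimensional inequality in $f=u\circ\gamma$:
\[
\int_0^\infty f^\alpha\Big[(n-1)\varphi'^2-\alpha\varphi^2 f''/f\Big]\ge\int_0^\infty f^{\alpha}\big(\Ric(\gamma',\gamma')-\Ric\big)\varphi^2\cdot(\text{const})+\dots
\]
Next, integrate the $f''$ term by parts. Then choose $\varphi=f^{-\beta}\psi$ with $\beta$ tuned so that the cross terms in $f'/f$ are complete squares. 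Completing the square is possible exactly when $\alpha<\frac{4}{n-1}$, and this leaves a strictly positive multiple of $\int f^{\alpha-2\beta-2}f'^2\psi^2$ on the good side. Because $f'(0)=0$ by the Neumann condition, the boundary terms at $s=0$ from the integration by parts vanish or have the right sign.

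\emph{Step 3: cutoff and rigidity.} Take $\psi$ to be $1$ on $[0,R]$ and linear down to $0$ on $[R,2R]$. The error term is bounded by $R^{-2}\int_R^{2R} f^{\alpha-2\beta}$. This is where I expect the main obstacle: controlling the growth of $f$ along $\gamma$. I would use the weighted-minimizing property to compare $\gamma$ against competitors. The better route is Liu's deformation argument, as in Theorem \ref{hongwang}: if $\Ric\not\ge0$ somewhere, or $f'\ne0$, perturb the metric or weight locally to produce a strictly positive defect and contradict minimality. Letting $R\to\infty$ forces $f'\equiv0$, $H_{\partial M}=0$ at $\gamma(0)$, $\Ric(\gamma',\gamma')=\Ric$, and $\nabla^\perp u=0$ along $\gamma$. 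Then $\Delta u=0$ along $\gamma$, so $\Ric=0$ there.

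To get $\Ric\ge0$ on all of $M$, I would repeat the argument for weighted rays issuing from every boundary point. These exist by taking limits of weighted minimizers from $\partial M$ to $\gamma(t)$, $t\to\infty$, using the asymptotic comparison with $\gamma$. This shows that $u$ is constant on a foliating family. Combined with the equation and the maximum principle, it gives $u\equiv$ const, hence $\Ric\ge0$. Finally, with $\Ric\ge0$, the mean-convex boundary, and a ray, the classical boundary splitting result (Kasue, Croke–Kleiner type) shows that $M$ is isometric to $\partial M\times\mathbb{R}_+$.
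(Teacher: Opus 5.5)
Your local analysis (Steps 1--2) is essentially the mechanism the paper attributes to \cite{hong-wang-jfa}. It consists of the second variation of $L_u^\alpha$ with the mean-curvature term at $\gamma(0)$, the identity $f'(0)=0$ from the Neumann condition, the substitution $\phi=\psi u^{-\alpha/2}$ and completion of squares for $\alpha<\frac{4}{n-1}$; compare Section~\ref{section2} and the key inequality in the proof of Theorem~\ref{classic-finite-ends}.

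However, Step 3 misdiagnoses the difficulty. The threshold $\frac{4}{n-1}$ is exactly what $\beta=\alpha/2$ gives, and then $f^{\alpha-2\beta}\equiv 1$. So the cutoff error is $C\int\psi'^2\le C/R\to 0$, with no growth control on $f$ needed, and Liu's deformation plays no role here. Letting $R\to\infty$ directly gives $f'\equiv0$, $\nabla^\perp u=0$, $H(\gamma(0))=0$ and $\Ric(\gamma',\gamma')=\Ric=\alpha\Delta u/u$ along $\gamma$. It does \emph{not} give $\Delta u=0$ on $\gamma$: vanishing of $\nabla u$ along a curve says nothing about transverse second derivatives. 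So even $\Ric=0$ on $\gamma$ is not established.

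The genuine gap is the globalization from one rigid ray to all of $M$.

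\emph{Rays from $\partial M$ to $\gamma(t)$.} Minimizing $L_u^\alpha$ from $\partial M$ to $\gamma(t)$ produces nothing new. The arc $\gamma|_{[0,t]}$ is already such a minimizer, and you have no control over where on $\partial M$ any minimizer starts.

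\emph{Rays from a fixed boundary point.} If you instead fix $x\in\partial M$ and minimize from $x$ to $\gamma(t)$, the limit ray need not be orthogonal to $\partial M$, nor minimize weighted distance to $\partial M$. Only variations fixing the initial point are then admissible, so $\psi(0)=0$ is forced. The inequality then yields only decay of the defect at infinity, never rigidity. A one-sided ray carries no rigidity: with $u\equiv1$ on a paraboloid, every point emits a ray although $\Ric>0$.

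\emph{Covering.} Even granting rigid free-boundary rays from every boundary point, nothing shows they sweep out $M$; that is essentially the splitting itself. The phrase ``$u$ constant on a foliating family plus the maximum principle'' is not an argument.

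What is missing is a genuinely global mechanism. The paper delegates this to \cite{hong-wang-jfa}, where it is Liu's deformation argument \cite{liugang}. The analogous step in Section~\ref{section3} argues by contradiction. It modifies $u$ (Proposition~\ref{perturbation}, based on \cite[Lemma 2.2]{antonelli-pozzetta-xu}) so that $-\alpha\Delta u+\Ric\,u>0$ on the region where the weighted minimizer lies. This is incompatible with the equality $\Ric(\partial_s,\partial_s)=\alpha\Delta u/u$ forced by the second variation.

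Your remark about perturbing to create a strictly positive defect points this way, but you use it for the nonexistent growth issue and then abandon it. To make it work you must show two things: that the modified weight still satisfies the spectral inequality, and that it admits a free-boundary weighted minimizing ray meeting the region of strict inequality. The latter is not automatic, since $\partial M$ is not assumed compact.

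For comparison, Section~\ref{section2} globalizes in the boundaryless line case via weighted Busemann functions, the maximum principle and Bochner--Kato. That too does not produce rays through every point. Your final appeal to the classical boundary splitting would be fine once $u$ is known to be constant.
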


We remark that the argument in \cite{hong-wang-jfa} also yields the following statement, showing that the condition of two ends in Theorem \ref{antonelli} can be replaced by a weaker condition (see Proposition \ref{existence of weighted minimizing line}).

\begin{theorem}\label{main theorem 1}
Let $(M^n,g)$ be a smooth complete noncompact Riemannian manifold with nonnegative $\alpha$-Ricci curvature in the spectral sense. Assume that $\alpha<\frac{4}{n-1}$ and that there exists a weighted minimizing line with respect to $\int u^\alpha$. Then $\Ric\geq 0$ on $M$, and hence $M$ splits off an $\mathbb{R}$ factor.
\end{theorem}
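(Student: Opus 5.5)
The plan is to run the second variation of the weighted length $L_u(\gamma)=\int_\gamma u^\alpha\,ds$ along the weighted minimizing line $\gamma:\mathbb{R}\to M$. We combine it with the equation \eqref{equation 1 of u} and a cutoff in the arclength parameter, and conclude that $\Ric\equiv 0$ along $\gamma$ together with a rigidity for $u$. We then propagate this to all of $M$ by Liu's deformation argument, as in \cite{hong-wang-jfa}.

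First, fix a parallel orthonormal frame $\{e_i\}_{i=1}^{n-1}$ normal to $\gamma$. Take variations $\varphi(s)e_i$ with $\varphi$ compactly supported on $\mathbb{R}$ and sum over $i$. Minimality of $\gamma$ for $L_u$ on every compact subsegment gives
\[
0\le \int_{\mathbb{R}} u^\alpha\Big[(n-1)\varphi'^2-\Ric(\gamma',\gamma')\varphi^2\Big]+\alpha u^{\alpha-1}\varphi^2\big(\Delta u-\nabla^2u(\gamma',\gamma')\big)+(\text{first-order terms in }\nabla u)\,ds.
\]
We substitute $\alpha\Delta u=\Ric\,u\le \Ric(\gamma',\gamma')u$ and use $\Delta u-\nabla^2u(\gamma',\gamma')=\Delta u-(u\circ\gamma)''+H\text{-terms}$; on a geodesic-like curve the latter reduces to $(u\circ\gamma)''$ up to curvature corrections. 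The Ricci terms then cancel up to a sign, leaving $-\big(\Ric(\gamma',\gamma')-\Ric\big)u^{\alpha}\varphi^2\le 0$, plus a one-dimensional quadratic form in $\varphi$ and $f=\log u\circ\gamma$. The Euler--Lagrange equation of $\gamma$ gives the geodesic curvature of $\gamma$ as $\alpha\nabla^\perp \log u$. Those normal-gradient terms should combine with $\sum_i\nabla^2 u(e_i,e_i)$ to produce a nonpositive term $-c|\nabla^\perp u|^2$.

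Next, we substitute $\varphi=u^{-\beta}\psi$ with $\beta$ chosen to absorb the $f'$ cross terms. The threshold $\alpha<\frac{4}{n-1}$ is exactly what leaves a positive coefficient in front of $f'^2\psi^2$ after a Cauchy--Schwarz step. This yields
\[
\int_{\mathbb{R}}\Big(\big(\Ric(\gamma',\gamma')-\Ric\big)+c_1 f'^2+c_2|\nabla^\perp\log u|^2\Big)\psi^2\le C\int_{\mathbb{R}}\psi'^2 .
\]
Taking $\psi$ to be the standard logarithmic cutoff (or the linear cutoff $\psi=1-|s|/R$ on $[-R,R]$, giving right side $O(1/R)$), the right side tends to zero. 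Hence along $\gamma$ we get $u$ constant, $\nabla u=0$, and $\Ric=\Ric(\gamma',\gamma')$. The equation then forces $\Ric(x)=\alpha\Delta u/u$ at points of $\gamma$. The second-order condition for $u$ constant with vanishing normal derivatives, fed back into the equality case of the inequality above, should give $\Delta u=0$ and hence $\Ric=0$ on $\gamma$.

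The main obstacle is globalizing. Following Liu's deformation argument (\cite{liugang}, as used in \cite{hong-wang-jfa}), we consider the set of points through which a weighted minimizing line passes. Near $\gamma$ we construct weighted minimizing lines through nearby points: we take limits of weighted-minimizing segments joining $\gamma(-R)$ to $\gamma(R)$ constrained to pass near a given point, or equivalently we use the weighted Busemann functions $b_\pm$ with $b_++b_-\ge 0$ vanishing on $\gamma$. The step above shows this set is open and closed, hence all of $M$. Therefore $\Ric\ge 0$ everywhere, and in fact $u$ is constant so that $\Ric\equiv0$ in the spectral direction. The classical Cheeger--Gromoll splitting theorem then gives that $M$ splits off an $\mathbb{R}$ factor. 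I expect the openness step, producing minimizing lines through nearby points for the weighted functional, to be the most delicate; there I would mimic \cite{hong-wang-jfa} verbatim, with rays replaced by lines.
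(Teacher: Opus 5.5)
Your first step is fine. With a linear cutoff on $[-R,R]$, the second variation inequality (the one the paper quotes from \cite{hong-wang-jfa} in Section~\ref{section3}) gives $u_s=0$, $\nabla^\perp u=0$ and $\Ric(\gamma',\gamma')=\Ric$ along $\gamma$. The trouble is everything after that.

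First, the claim that ``second-order conditions'' then force $\Delta u=0$ on $\gamma$ does not follow. Once $\nabla u=0$ on $\gamma$, the curve is a geodesic and $\nabla^2u(\gamma',\gamma')=(u\circ\gamma)''=0$. The zeroth-order coefficient $\alpha\Delta u/u-\Ric(\gamma',\gamma')$ then vanishes identically, so the summed second variation collapses to $(n-1)u^\alpha\int\varphi'^2\,ds$. It carries no information about the sign of $\Delta u$.

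More seriously, the globalization is not an argument. Rigidity along a single line says nothing about points off $\gamma$, so ``the step above shows this set is open'' is false. Neither mechanism you name supplies openness. Segments from $\gamma(-R)$ to $\gamma(R)$ forced through a point $x$ only converge to two weighted rays from $x$, and nothing makes their union minimizing unless you already know $b_+(x)+b_-(x)=0$. Likewise, knowing that $b_++b_-\ge 0$ vanishes on $\gamma$ yields $b_++b_-\equiv0$ only if $b_\pm$ are superharmonic (in your sign convention), so that the strong maximum principle applies. That superharmonicity is the missing idea, and the proposal never establishes it. The paper does remark that the deformation argument of \cite{hong-wang-jfa} adapts to this statement. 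But that argument is not the open--closed scheme you describe, and deferring to it ``verbatim'' leaves the whole global content of the theorem unproved.

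The paper supplies exactly this ingredient, as a weighted Laplacian comparison. Run the same second variation along a weighted minimizing segment from $p$ to $q$ (parametrized on $[0,l]$). Use the test fields $\phi e_i$ with $\phi=u^{-\alpha/2}\psi$ and $\psi(s)=u^{\alpha/2}(q)\,s/l$, which vanish at $p$ but not at $q$. The condition $\alpha<\frac{4}{n-1}$ absorbs the cross terms in $u_s/u$, and $-\alpha\Delta u+\Ric\,u\ge0$ absorbs the curvature. This gives $\Delta_qL_u^\alpha(p,q)\le C(n,\alpha)u^\alpha(q)/d(p,q)$ in the viscosity sense.

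Consequently the weighted Busemann functions $b^\pm$ (paper's convention) are subharmonic, and $b^++b^-\le0$ with equality on $\gamma$. The strong maximum principle then gives $b^++b^-\equiv0$, with $b^\pm$ harmonic and smooth. Then $|\nabla b^+|=u^\alpha$, and the Bochner formula together with the equation for $u$ gives $|\nabla^2b^+|^2\le\frac{2\alpha-1}{\alpha}|\nabla|\nabla b^+||^2$. The refined Kato inequality $|\nabla|\nabla b^+||^2\le\frac{n-1}{n}|\nabla^2b^+|^2$ forces $\nabla^2b^+=0$. Hence $u$ is constant, $\Ric\ge0$, and $\nabla b^+$ is a parallel field giving the splitting. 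This argument is global from the start and uses neither the rigidity along $\gamma$ nor any deformation.
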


A common feature of the proofs in \cite{antonelli-pozzetta-xu,catino-Mari-Mastrolia-Roncoroni,hong-wang-jfa} is that the final step establishes pointwise nonnegativity of the Ricci curvature. The first goal of this paper is to provide a fourth proof of these splitting results by adapting the classical argument of the Cheeger--Gromoll splitting theorem to the spectral setting (see Section \ref{section2}).

\vskip.2cm
An important application of Theorem \ref{antonelli} concerns the topology of stable minimal hypersurfaces (not necessarily proper) in manifolds with nonnegative biRic curvature. In particular, such hypersurfaces have at most two ends; see \cite[Corollary 1.2]{antonelli-pozzetta-xu}. This motivates the second main result of the present paper below.

\vskip0.2cm

Spectral curvature conditions arise naturally in the study of stable minimal hypersurfaces and play a fundamental role in recent progress on stable Bernstein theorems \cite{chodosh-li-stryker,mazet}. In particular, if the ambient manifold $N$ has nonnegative biRic curvature, then every stable minimal hypersurface admits nonnegative spectral Ricci curvature. Indeed, combining the Gauss equation with the stability inequality yields
\begin{equation}\label{spectral Ric}
\int_M |\nabla \varphi|^2+\Ric_M\varphi^2
\geq C\int_M \varphi^2
\end{equation}
whenever the biRic curvature of $N$ is bounded below by $C$. This observation first appeared in the work of Shen and Ye \cite{shenyingyerugang} and can be viewed as a natural generalization of the classical theorem of Fischer-Colbrie and Schoen \cite{Fischer-Colbrie-Schoen-The-structure-of-complete-stable}.

The study of the ends of stable minimal hypersurfaces is closely related to Bernstein-type problems. In Euclidean space, complete stable minimal hypersurfaces have only one end \cite{Cao-Shen-Zhu-infinitevolume}. Li and Wang later proved that finite-index minimal hypersurfaces in $\mathbb{R}^n$ have finitely many ends \cite{Li-Wang-finiteindex}, using Sobolev inequalities on minimal hypersurfaces. Subsequently, Li and Wang \cite{Li-Wang-nonnegatively-curved-manifold} showed that properly immersed finite-index minimal hypersurfaces in manifolds with nonnegative sectional curvature also have finitely many ends. Their argument relies on the convexity of Busemann functions under nonnegative sectional curvature and only detects ends extending to infinity, which explains the necessity of the properness assumption.

The second goal of this paper is to generalize the result of Li--Wang to minimal hypersurfaces (not necessarily proper) in manifolds satisfying weaker curvature assumptions.

\begin{theorem}\label{main theorem 2}
Let $(M^n,g)$, $n\leq 5$, be a Riemannian manifold with nonnegative biRic curvature. Then every finite-index complete noncompact two-sided minimal hypersurface in $M$ has finitely many ends.
\end{theorem}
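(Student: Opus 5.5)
Let $\Sigma^{n-1}\subset M$ be a complete two-sided minimal hypersurface of finite index. The plan is to reduce to the spectral splitting results of Section \ref{section2}, applied outside a compact set, with weighted minimizing geodesics near infinity playing the role of lines.

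\emph{Step 1 (spectral condition at infinity).} Finite index gives a compact set $K\subset\Sigma$ such that $\Sigma\setminus K$ is stable. Combining the Gauss equation with stability, exactly as in \eqref{spectral Ric} with $C=0$, gives $\lambda_1(-\Delta+\Ric_\Sigma)\ge 0$ on $\Sigma\setminus K$. Since $\Sigma$ has dimension $n-1$, we need $1<\frac{4}{(n-1)-1}=\frac{4}{n-2}$, which holds precisely when $n\le 5$. So we may take $\alpha=1$ in the spectral theory. Fix a large geodesic ball $B_{R_0}\supset K$ and a positive solution $u$ of $-\Delta u+\Ric\,u=0$ on $\Sigma\setminus B_{R_0}$.

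\emph{Step 2 (reduction to finitely many unbounded components).} Suppose $\Sigma$ has infinitely many ends. Then, enlarging $R_0$, $\Sigma\setminus B_{R_0}$ has at least $m$ unbounded components $E_1,\dots,E_m$ for any prescribed $m$. We cannot use Busemann functions of $M$ as Li--Wang do, since properness is not assumed. Instead we work intrinsically on $\Sigma$.

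\emph{Step 3 (weighted minimizing geodesics near infinity).} For each pair of ends $E_i,E_j$ and points $p_k\in E_i$, $q_k\in E_j$ tending to infinity, minimize the weighted length $\int u^\alpha$ among curves joining $p_k$ to $q_k$. The weight $u$ must be suitably extended across $B_{R_0}$ (for instance cut off to a positive function), or the curves must be constrained to pass through $\partial B_{R_0}$. As $k\to\infty$ this produces a weighted minimizing line $\gamma_{ij}$. We need $\gamma_{ij}$ either to lie in $\Sigma\setminus B_{R_0}$ or to have both tails there. Taking three ends, we then use the second variation of weighted length along the tails, as in the argument for Theorem \ref{main theorem 1}/\ref{hongwang} together with Liu's deformation. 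The aim is to show that the tails must be "split-like", that is, $\Ric\ge 0$ along a neighbourhood and the end is a cylinder $N\times\mathbb{R}_+$.

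An alternative, which I would try first, is a harmonic-function/Li--Tam approach. Infinitely many ends of a manifold with $\lambda_1(-\Delta+\Ric)\ge0$ outside a compact set produce many bounded weighted-harmonic functions with finite energy. The spectral Bochner argument behind Theorem \ref{antonelli}, applied with cutoffs on $\Sigma\setminus B_{R_0}$, then bounds the dimension of this space by a constant depending only on the compact part, which gives finitely many ends. Concretely, for each end construct $f$ with $\Delta_{u}f=0$ tending to $1$ on that end and $0$ on the others. Apply the weighted Bochner formula with test function $u^{\beta}\varphi$ and the refined Kato inequality $|\nabla^2 f|^2\ge\frac{n-1}{n-2}|\nabla|\nabla f||^2$ on the $(n-1)$-manifold $\Sigma$; this step uses $\alpha<4/(n-2)$. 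The result is that each $|\nabla f|$ is controlled by its energy on a fixed compact set. The linear space of such $f$ is then finite dimensional, since harmonic functions are determined by finitely many parameters on a compact set by unique continuation/compactness.

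\emph{Main obstacle.} The hardest step is the cutoff: making the spectral Bochner or weighted second-variation estimate hold on $\Sigma\setminus B_{R_0}$ with error terms supported only near $\partial B_{R_0}$, uniformly in the number of ends. This uniformity is exactly what Li--Wang obtained from convexity of Busemann functions. I would handle it by choosing weights $u^{\beta}$ with $\beta$ in the admissible range given by $\alpha<4/(n-2)$. That choice makes the interior terms absorb the Ricci term and leaves a boundary term bounded by a fixed quantity on $\partial B_{R_0}$.
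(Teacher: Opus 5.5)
Your Step~1 is correct and matches the paper. Stability outside a compact set, the Gauss equation and nonnegative biRic give $\lambda_1(-\Delta+\Ric_\Sigma)\ge 0$ on $\Sigma\setminus K$, and $\alpha=1<\frac{4}{n-2}$ is exactly where $n\le 5$ enters.

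\textbf{Step~3.} As set up, this step does not work. A line joining two \emph{different} components $E_i,E_j$ of $\Sigma\setminus B_{R_0}$ must cross $\overline{B_{R_0}}$, so your first option (the line lies in $\Sigma\setminus B_{R_0}$) is impossible for these lines. On $B_{R_0}$ the extended weight satisfies no inequality, so the second variation along the whole line has no sign. Restricting to variations supported on a tail puts you on a ray with a fixed endpoint. Any $\psi$ with $\psi=0$ at the start and $\psi=1$ at distance $s_0$ costs $\int\psi_s^2\ge 1/s_0$, so you only get
$\int_{\mathrm{tail}}\bigl(\Ric(\partial_s,\partial_s)-\alpha\frac{\Delta u}{u}\bigr)\psi^2\lesssim 1/s_0$.
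That is an integral smallness bound, not rigidity, and it gives nothing to feed into Liu's deformation. You also never explain why cylindrical ends would contradict infinitely many ends.

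\textbf{The missing idea.} Use the infinitude of ends to build a weighted minimizing line \emph{inside a single unbounded component} $\tilde M$ of $\Sigma\setminus\Omega$, away from the compact core. The paper does this as follows.
\begin{enumerate}
\item Take infinitely many free-boundary $L_u^\alpha$-minimizing rays in $\tilde M$. Their feet accumulate at some $p\in\partial\tilde M$, so $\gamma_i\to\gamma_p$.
\item Minimize between far points of $\gamma_i$ and $\gamma_p$. Since $\gamma_i$ and $\gamma_p$ are $\epsilon$-close on $\partial_{in}\tilde K$, any competitor touching $\partial\tilde M$ is longer than the concatenation through the short arc $s_j$. Hence the minimizers stay in the interior and converge to a line (Lemma~\ref{lemma:exists-minimizing-line}).
\item Perturb $u$ so that $-\alpha\Delta u+\Ric\,u>0$ outside $\tilde K$ (Proposition~\ref{perturbation}). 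The second variation along a \emph{full} line, with the standard cutoff, forces $\Ric(\partial_s,\partial_s)-\alpha\frac{\Delta u}{u}\equiv 0$ along it, which is a contradiction.
\end{enumerate}

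\textbf{The harmonic-function alternative.} This route breaks at the claim that ``infinitely many ends produce many bounded weighted-harmonic functions with finite energy.'' Li--Tam gives such functions only for \emph{nonparabolic} ends. A parabolic end, for instance a cylindrical end $P\times[0,\infty)$ with $P$ compact, carries zero harmonic measure. So a bounded harmonic function cannot have its behaviour at infinity on such an end prescribed independently (``$1$ on this end, $0$ on the others''). Bounded finite-energy harmonic functions therefore count only nonparabolic ends. Nothing in the hypotheses excludes parabolic ends, since there is no Sobolev inequality on $\Sigma$ in a general ambient manifold, and your proposal offers nothing to control them. This, not the uniformity of the cutoff, is the real obstacle.

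Note also that the stability-plus-refined-Kato estimate for harmonic $f$ needs no weight and holds in every dimension. It gives $\frac{1}{n-2}\int|\nabla|\nabla f||^2\varphi^2\le\int|\nabla f|^2|\nabla\varphi|^2$ for $\varphi$ supported in the stable region. So this estimate cannot be where $n\le 5$ is used, and by itself it only bounds the number of nonparabolic ends.
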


Indeed, if $\Sigma$ is a finite-index minimal hypersurface in $M$, then $\Sigma$ is stable outside a compact subset $\Omega$. By \eqref{spectral Ric}, the manifold $\Sigma\setminus\Omega$ has nonnegative spectral $1$-Ricci curvature. Equivalently, there exists a positive function $u$ on $\Sigma\setminus\Omega$ satisfying
\[
-\Delta u+\Ric\,u=0
\quad\text{on }\Sigma\setminus\Omega.
\]
There always exists a free boundary weighted minimizing ray on $\Sigma\setminus\Omega$ since $\partial(\Sigma\setminus \Omega)$ is compact. If $\partial(\Sigma\setminus\Omega)$ is mean convex, Theorem \ref{hongwang} would complete the proof of Theorem \ref{main theorem 2}. However, this is not ensured no matter how large we choose $\Omega$ to be. Thus the goal is not to find free boundary rays but instead lines on $\Sigma\setminus \Omega.$

The proof of Theorem \ref{main theorem 2} proceeds by contradiction and consists of three steps. First, assuming that $\Sigma$ has infinitely many ends, we construct a weighted minimizing geodesic line at infinity. Second, we perturb the above function $u$ to obtain a positive function, still denoted by $u$, and a compact subset $K\subset\Sigma$ such that
\[
-\Delta u+\Ric\,u>0
\quad\text{on }\Sigma\setminus K.
\]
Finally, applying the second variation formula for the weighted length functional yields rigidity, leading to the desired contradiction with the strict inequality above.

During the completion of this work, Zhu-Bi \cite{zhu-bi} proved the finiteness of ends for manifolds with nonnegative Ricci curvature outside a compact set, also using geodesic lines at infinity. Ours can be thought of as a spectral version of their result. It is also worth noting that recent work by Antonelli, Li, and Sweeney \cite{antonelli-li} proved a spectral splitting theorem for manifolds with boundary, improving $\alpha$ to a larger 
 bound $\frac{n-1}{n-2}$ under different assumptions.

\vskip.2cm
 In Section \ref{section2}, we provide a new proof of the spectral splitting theorem. In Section \ref{section3}, we show that nonnegative spectral $\alpha$-Ricci curvature at infinity together with infinitely many ends implies the existence of a weighted minimizing line; we then prove the main result. 

 \subsection{Acknowledgements}
 The first author is supported by NSFC No. 12401058 and the Fundamental Research Funds for the Central Universities No. 2024XKRC008.

\section{Spectral splitting}\label{section2}

In this section, we give a new and direct proof of the spectral splitting results in \cite{antonelli-pozzetta-xu,catino-Mari-Mastrolia-Roncoroni,hong-wang-jfa}.

Let $M$ be a complete noncompact manifold and $u$ be a positive $C^2$ function on $M$. For a piecewise smooth curve $\gamma$, we consider the weighted length function defined by
\[
	L_u^\alpha(\gamma)=\int_{ \gamma}u^\alpha ds 
\]
and for two points $p,q \in M$, we define the weighted distance as

\[
	L_u^\alpha(p,q)=\inf_{\gamma \text{ has endpoints }p,q} L_u^\alpha(\gamma).
\]

\begin{definition}
    We say $(M,g)$ contains a weighted minimizing line (ray) if there exists a curve $\gamma(s):(-\infty,\infty)\rightarrow M$ (resp. $\gamma: [0,\infty)\rightarrow M$) parametrized by the $g$-arclength such that the $L_u^\alpha$-length of an arc of $\gamma$ between any two points is just the weighted distance between these two points. In other words,
    \[L_u^\alpha(\gamma(t_1),\gamma(t_2))=L_u^\alpha(\gamma_{t_1,t_2}), \ \ \forall\ t_1,t_2\in (-\infty,\infty), \ \ (\text{resp}. \ t_1,t_2\in [0,\infty))\]
    where $\gamma_{t_1,t_2}$ is the arc of $\gamma$ between $\gamma(t_1)$ and $\gamma(t_2)$.
\end{definition}
\begin{remark}
    For a weighted minimizing line $\gamma$ in $M$, $L_u^\alpha(\gamma)$ is possibly finite.
\end{remark}
In the following we prove a spectral Laplacian comparison theorem.
\begin{lemma}
Suppose $(M,g)$ has nonnegative spectral Ricci curvature, i.e., there exists a positive function $u$ such that $-\alpha \Delta u +\mathrm{Ric}\cdot u\ge 0.$ Let $p$ be a fixed point in $M$ and $\alpha<\frac{4}{n-1}$, 	
	then we have
	\[
		\Delta_q L_u^\alpha(p,q)\le \frac{Cu^\alpha(q)}{d(p,q)}
	\]
	in the viscosity sense.
	Here, $C=C(n,\alpha)$ is a constant depending only on $n,\alpha.$
	
\end{lemma}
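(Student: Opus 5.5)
Fix $q\neq p$ and let $\gamma:[0,\ell]\to M$, $\ell=d(p,q)$ or rather the $g$-length of $\gamma$, be a minimizing curve for $L_u^\alpha(p,q)$, parametrized by $g$-arclength with $\gamma(0)=p$, $\gamma(\ell)=q$; existence follows from completeness and the positivity and local boundedness of $u$. We build an upper barrier for $f(x)=L_u^\alpha(p,x)$ at $q$ as follows. Take a parallel orthonormal frame $e_1,\dots,e_{n-1}$ normal to $\gamma'$, and a cutoff $\phi(s)$ with $\phi(0)=0$, $\phi(\ell)=1$. For $x$ near $q$, write $x=\exp_q(v)$ and use the curve $\gamma_v(s)=\exp_{\gamma(s)}(\phi(s)V(s))$, where $V$ is the parallel transport of $v$ along $\gamma$. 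Then $f(x)\le L_u^\alpha(\gamma_v)=:F(x)$ and $F(q)=f(q)$, so it suffices to bound $\Delta F(q)=\sum_i \frac{d^2}{dt^2}L_u^\alpha(\gamma_{te_i})|_{t=0}$ plus a tangential term.

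For the tangential direction, moving $q$ along $\gamma'$ changes $F$ by $u^\alpha$ to first order, and the second derivative is $\alpha u^{\alpha-1}\langle\nabla u,\gamma'\rangle$, which is absorbed together with the normal terms. For the normal directions, the second variation of weighted length along a weighted geodesic (first variation zero gives $\nabla^\perp$-curvature $\kappa=\alpha\nabla^\perp\log u$) yields, after summing over $i$,
\[
\Delta F(q)\le \int_0^\ell u^\alpha\Big[(n-1)\phi'^2-\phi^2\Ric(\gamma',\gamma')+\alpha\phi^2\frac{\Delta u-\nabla^2u(\gamma',\gamma')}{u}-\alpha\phi^2\frac{|\nabla^\perp u|^2}{u^2}(\ldots)\Big]ds+\text{boundary terms at }s=\ell.
\]
We then use $\Ric\ge \alpha\Delta u/u$ and rewrite $\nabla^2u(\gamma',\gamma')$ via $(u\circ\gamma)''$ plus the curvature term. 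This is the computation of \cite{hong-wang-jfa}.

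The main step, and the main obstacle, is to choose $\phi$ optimally, e.g.\ $\phi=u^{-\beta}(s/\ell)\cdot$(normalizing factor), or more cleanly to integrate by parts the term $\alpha u^{\alpha-1}\phi^2(u\circ\gamma)''$. This produces a cross term $\phi\phi' u^{\alpha-1}u'$ and a $u^{\alpha-2}u'^2$ term. A Cauchy--Schwarz/completing-the-square argument then absorbs the cross term into $(n-1)\phi'^2$ and the good $u'^2$ term; the condition $\alpha<\frac{4}{n-1}$ is exactly what makes the quadratic form in $(\phi', \phi u'/u)$ controllable. The outcome is $\Delta F(q)\le C\int_0^\ell u^\alpha\phi'^2$. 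Finally, we pick $\phi$ of the form $\phi=(s/\ell)^{k}$ weighted by a suitable power of $u$ chosen so that the boundary term at $s=\ell$ and the bulk integral combine into $Cu^\alpha(q)/\ell$. The boundary term at $q$ is $u^\alpha(q)\phi\phi'(\ell)$-type, plus $\alpha u^{\alpha-1}\langle\nabla u,\cdot\rangle$, which cancel against the tangential contribution. If the bulk term cannot be made to depend only on $u(q)$, I would instead choose $\phi$ to solve the one-dimensional Euler--Lagrange equation of the weighted energy, making the bulk term vanish and leaving only the endpoint term $u^\alpha(q)\phi'(\ell)$. Comparison with the case $u\equiv$ const then gives $\phi'(\ell)\lesssim 1/\ell$. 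Since $F\ge f$ touches from above at $q$, the bound holds in the viscosity sense, with $d(p,q)\le\ell$ adjusted as needed.
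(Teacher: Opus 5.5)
\textbf{There is a genuine gap in the analytic core of your proof.} Your overall structure matches the paper's: an upper barrier at $q$ built from the variation $\phi e_i$ of a weighted minimizing curve, the second variation summed over normal directions, the tangential contribution cancelled against the endpoint term from integrating $(u\circ\gamma)''$ by parts, and then $\Ric\ge\alpha\Delta u/u$.

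The failure is in the step you yourself flag as the main obstacle. Write $X=\phi'$ and $Y=\phi u'/u$. After the integration by parts, your integrand is
\[
u^\alpha\big[(n-1)X^2+2\alpha XY+\alpha(\alpha-1)Y^2\big]
\]
plus nonpositive terms. For $\alpha\ge 1$ (in particular $\alpha=1$, the case needed in Section 3) the $Y^2$ coefficient is $\ge 0$. So there is no good $u'^2$ term, and your intermediate bound $\Delta F(q)\le C\int_0^\ell u^\alpha\phi'^2$ does not follow. The condition that makes this form controllable by $X^2$ is $\alpha<1$, not $\alpha<\frac{4}{n-1}$.

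Even where that bound holds, it cannot give the lemma. For every admissible $\phi$, Cauchy--Schwarz gives $\int_0^\ell u^\alpha\phi'^2\ge\big(\int_0^\ell u^{-\alpha}\big)^{-1}$. This is not $\lesssim u^\alpha(q)/\ell$ when $u(q)$ is small compared with $u$ along $\gamma$. The same objection kills your Euler--Lagrange fallback: there $\phi'(\ell)=u^{-\alpha}(q)/\int_0^\ell u^{-\alpha}$, and no comparison with constant $u$ bounds this by $1/\ell$.

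The missing idea is the substitution $\phi=\psi u^{-\alpha/2}$. After discarding the nonpositive terms $-(\Ric(\gamma',\gamma')-\alpha\Delta u/u)\psi^2$ and $-\alpha\psi^2|\nabla^\perp\log u|^2$, the same expression becomes
\[
\Delta F(q)\le\int_0^\ell \Big[(n-1)\psi_s^2-\alpha(n-3)\psi\psi_s\frac{u_s}{u}-\alpha\Big(1-\frac{n-1}{4}\alpha\Big)\psi^2\frac{u_s^2}{u^2}\Big]\,ds .
\]
Now the $u_s^2$ coefficient is negative exactly when $\alpha<\frac{4}{n-1}$, and Young's inequality gives $\Delta F(q)\le C\int_0^\ell\psi_s^2$. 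The boundary values $\psi(0)=0$ and $\psi(\ell)=u^{\alpha/2}(q)$ involve only $u(q)$. So the linear choice $\psi=u^{\alpha/2}(q)s/\ell$ yields $Cu^\alpha(q)/\ell\le Cu^\alpha(q)/d(p,q)$. Your ``$\phi=u^{-\beta}(s/\ell)$'' gestures at this with $\beta=\alpha/2$. However, the quantity you must control is $\int\big((u^{\alpha/2}\phi)'\big)^2$, not $\int u^\alpha\phi'^2$.

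\textbf{Second gap: existence of a minimizing curve.} You claim one exists by completeness of $g$ and positivity of $u$; this is false in general. $L_u^\alpha$ is the length functional of $u^{2\alpha}g$, which can be incomplete. Take flat $\mathbb{R}^4$, $\alpha=1$, and $u=(1+|x|^2)^{-1}$:
\begin{itemize}
\item $\Delta u=-8(1+|x|^2)^{-3}\le 0$, so the hypothesis holds;
\item $u^2g$ is a round sphere of radius $\frac12$ minus a point;
\item for $p=Re_1$, $q=-Re_1$ with $R$ large, the unique spherical minimizer passes through the missing point, so $L_u^\alpha(p,q)$ is not attained in $M$.
\end{itemize}
The paper treats this case separately. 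It represents the infimum by a broken path made of weighted minimizing rays $\gamma_q,\gamma_p$ from $q,p$ and weighted minimizing lines. It then uses $q'\mapsto L_u^\alpha(\gamma_q(t),q')+L_u^\alpha(\gamma_q(t),p)$ as an upper barrier touching at $q$, applies the attained case to $L_u^\alpha(\gamma_q(t),\cdot)$ to get $Cu^\alpha(q)/t$, and lets $t\to\infty$. Your proof needs an argument of this kind as well.
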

\begin{proof}
	We assume $p,q$ can be connected by a weighted minimizing curve (parametrized by $g$-arclength) $\gamma:[0,l]\rightarrow M$. 
	We also assume $q$ is not a cut point of $p$ along $\gamma$ for simplicity.

	Let $V$ be the variational vector field defined on a neighborhood of $\gamma(s)$.
	Let $\gamma_t(s)$ be the variation of $\gamma$ with variational vector field $V$.
	Then it follows from the second variation formula (see \cite[Lemma 2.4]{hong-wang-jfa}) that
	\begin{align*}
	    \delta^2_VL_u^\alpha(\gamma)&=\int_{ \gamma}|(\nabla_{T}V)^{\perp}|^{2}u^\alpha-R(T,V,T,V)u^\alpha+2 \mathrm{div}_\gamma(V)\nabla_{V}u^\alpha+\nabla^{2}u^\alpha(V,V)\\
        &+u^\alpha \left< \nabla_{V}V,\partial_s \right> |^{\gamma(l)}_{\gamma(0)}.
	\end{align*}
		
	If we consider $\gamma_t(0)=p$ for all $t$, and we define the function $f(t)=L_u^\alpha(\gamma_t)$.
	Then, we have $L_u^\alpha(p,\gamma_t(l))\le f(t)$ and equality holds when $t=0$.
	Then, we have
	\begin{align*}
		\nabla_V \nabla_VL_u^\alpha(p,q)={} & \frac{\partial^2 L_u^\alpha(p,\gamma_t(l))}{\partial t^2}-\left< \nabla_{V}V,\nabla L_u^\alpha(p,q) \right> |_{\gamma(l)}\\
		\le{}& \frac{\partial ^2f}{\partial t^2}-u^\alpha\left< \nabla_{V}V,\partial_s \right> |_{\gamma(l)}\\
		={}& \int_{ \gamma}(|(\nabla_{T}V)^{\perp}|^{2}-R(T,V,T,V))u^\alpha+2 \mathrm{div}_\gamma(V)\nabla_{V}u^\alpha+\nabla^{2}u^\alpha(V,V)
	\end{align*}
	
	For a $C^1$ function $\phi$ defined on $\gamma$ with $\phi(0)=0, \phi(l)=1$, we consider the vector field $V=\phi e_i$ along $\gamma$.
	Here, $e_n=\gamma'(s)$, and $\left\{ e_i \right\}$ is an orthonormal parallel frame normal to $\gamma'$.
	After taking summation, we have
	\begin{align*}
\Delta_q L_u^\alpha(p,q)\le{}&\int_{ \gamma}(n-1)\phi_s^{2}u^\alpha - \mathrm{Ric}(\gamma',\gamma')\phi^{2}u^\alpha  - \alpha\phi^{2}u^\alpha|\nabla^\bot \log u|^{2}+2\phi_s\phi \nabla_s u^\alpha+\phi^{2}\Delta u^\alpha 
	\end{align*}
We rewrite $\phi \rightarrow \psi u ^{-\frac{\alpha}{2}}$ and after simplification, we have
\begin{align}
\Delta_qL_u^\alpha(p,q)\le{}&\int_{ \gamma}(n-1)\psi_s^{2}-\mathrm{Ric}(\gamma',\gamma')\psi^{2}-\alpha \psi^{2} |\nabla^\bot \log u|^{2}\nonumber\\
	&+\alpha\left( \frac{n-1}{4}\alpha-1 \right)\psi^{2} \frac{u_s^{2}}{u^{2}}-\alpha(n-3)\psi \psi_s \frac{u_s}{u}+\alpha \psi^{2} \frac{\Delta u}{u}\nonumber\\
    \le{}&\int_{ \gamma}(n-1)\psi_s^{2}- \alpha \psi^{2}|\nabla^\bot \log u|^{2}+\alpha\left( \frac{n-1}{4}\alpha-1 \right)\psi^{2} \frac{u_s^{2}}{u^{2}}+\alpha(n-3)|\psi \psi_s| \frac{|u_s|}{u}\nonumber\\
    \le{}&\int_{ \gamma}C\psi_s^{2}\nonumber
\end{align}
for some constant $C=C(n,\alpha)$ when $\alpha<\frac{4}{n-1}$.
Here, we have used the inequality
\[
	-\alpha \Delta u +\mathrm{Ric}\cdot u\ge 0.
\]
Now, if we choose $\psi=\frac{u^{\frac{\alpha}{2}}(\gamma(l))}{l}s$,
we have
\[
	\Delta_q L_u^\alpha(p,q)\le \frac{Cu^\alpha(q)}{l}\le\frac{Cu^\alpha(q)}{d(p,q)}.
\]

Now, we assume the minimizing curve is achieved by $\gamma=\gamma_q\cup \cdots\cup \gamma_i\cup\cdots \cup \gamma_p$ where $\gamma_q:[0,+\infty)\rightarrow M$, $\gamma_p:(-\infty,0]\rightarrow M$ are minimizing rays and each $\gamma_i:(-\infty,\infty)\rightarrow M$ is a weighted minimizing line, moreover, $\gamma_q(0)=q, \gamma_p(0)=p$.
Note that $L_u^\alpha(p,q')\le L_u^\alpha(\gamma_1(t),q')+L_u^\alpha(\gamma_1(t),p)$ for any $t>0$ and $q'\in M$, with equality when $q'=q$.
Then we find
\[
	\Delta_{q} L_u^\alpha(p,q) \le \Delta_q L_u^\alpha(\gamma_1(t),q)\le \frac{Cu^\alpha(q)}{t}.
\]
We can take $t\rightarrow \infty$ to finish the proof.
\end{proof}

We shall use the weighted Busemann function to complete the proof of Theorem \ref{main theorem 1}.

\begin{proof}[Proof of Theorem \ref{main theorem 1}]
We define two weighted Busemann functions
\begin{align*}
	b^+(p)={} & \lim_{t\rightarrow +\infty}L_u^\alpha(\gamma(t),\gamma(0))-L_u^\alpha(\gamma(t),p)\\
	b^-(p)={} & \lim_{t\rightarrow +\infty}L_u^\alpha(\gamma(-t),\gamma(0))-L_u^\alpha(\gamma(-t),p).
\end{align*}
Let $b_t^+=L_u^\alpha(\gamma(t),\gamma(0))-L_u^\alpha(\gamma(t),p)$ and $b_t^-=L_u^\alpha(\gamma(-t),\gamma(0))-L_u^\alpha(\gamma(-t),p)$. It is easy to check that
\begin{itemize}
    \item $b_t^+(p)$ is uniformly bounded in $t$ on compact subsets of $M$, because
    \[|b_t^+(p)|\leq L_u^\alpha(\gamma(0),p).\]
    \item $b_t^+(p)$ is monotone increasing in $t$, because for $0<t_1<t_2$,
    \[b_{t_2}^+-b_{t_1}^+=L_u^\alpha(\gamma(t_1),p)-L_u^\alpha(\gamma(t_2),p)+L_u^\alpha(\gamma(t_1),\gamma(t_2))\geq  0.\]
    \item $b_t^+(p)$ is Lipschitz, because for $p,q\in M$,
    \[|b_t^+(p)-b_t^+(q)|=|L_u^\alpha(\gamma(t),p)-L_u^\alpha(\gamma(t),q)|\leq L_u^\alpha(p,q)\leq C(p,q)d(p,q)\]
    where $\lim_{q\rightarrow p}C(p,q)=u^{\alpha}(p).$
\end{itemize}
Similar properties hold for $b_t^-$. Note that $d(\gamma(t),q)$ tends to infinity as $t$ tends to infinity, so for any compactly supported function $\varphi\geq 0$ we have
\[\int_M b_t^+ \Delta \varphi=\int_M\varphi\Delta b_t^+=-\int_M \varphi \Delta L_u^\alpha(\gamma(t),p)\geq -\int_M\frac{C\varphi u^\alpha(p)}{d(\gamma(t),p)}\longrightarrow 0\]
in the distribution sense as $t\rightarrow \infty$.

Thus we can obtain that
\[
	\Delta b^+(p)\ge 0.
\]
Similarly,
\[\Delta b^-(p)\ge 0.\]
In general, we do not know whether $L_u^\alpha(\gamma(t),\gamma(0))$ as $t\rightarrow \pm\infty$ is finite or not.
Nevertheless, the triangle inequality implies $b_t^+(p)+b_t^-(p)\le 0$, thus
\[
	b^+(p)+b^-(p)\le 0.
\]
Moreover, we have equality along the line $\gamma$.
By the maximum principle, we have
\[
	b^+(p)+b^-(p)\equiv 0, \quad \Delta b^+(p)= \Delta b^-(p)\equiv 0.
\]
Furthermore, by elliptic regularity, $b^+$ and $b^-$ are smooth functions. 

Now, we apply the Bochner formula to obtain
\[
	\frac{1}{2}\Delta |\nabla b^+|^{2}={} \left| \nabla^{2} b^+ \right|^{2}+\mathrm{Ric}(\nabla b^+,\nabla b^+).
\]
Note that $|\nabla b^+|^2=u ^{2\alpha}$. Then
\[
	\frac{1}{2}\Delta |\nabla b^+|^{2}=\alpha u^{2\alpha -1}\Delta u +\alpha(2\alpha -1)u^{2\alpha -2}|\nabla u|^{2}.
\]
It follows that
\[
	|\nabla^2b^+|^2-\frac{2\alpha-1}{\alpha}|\nabla u^\alpha|^{2}=\alpha u^{2\alpha -1}\Delta u -\mathrm{Ric}|\nabla b^+|^2\le 0
\]
which is
\[
	|\nabla^2b^+|^2\le\frac{2\alpha-1}{\alpha}|\nabla |\nabla b^+||^{2}.
\]
By Kato's inequality, we have
\[
	|\nabla|\nabla b^+||^{2}\le \frac{n-1}{n}|\nabla^2 b^+|^2.
\]
So
\[
	|\nabla^2b^+|^2\le \frac{(2\alpha-1)(n-1)}{n\alpha}|\nabla^2 b^+|^2.
\]
Note that when $\alpha<\frac{n-1}{n-2}$,
we have
\[
	\frac{(2\alpha-1)(n-1)}{n\alpha}<1.
\]
Hence $\nabla^2 b^+=0$, $u$ is a constant function, and splitting follows.
\end{proof}

The following proposition says that a weighted minimizing line is guaranteed when $M$ has at least two ends.
\begin{proposition}\label{existence of weighted minimizing line}
    Let $\alpha<\frac{4}{n-1}$ and $u$ be a $C^2$ positive function on $M$. Let $(M, g)$ be a smooth noncompact Riemannian manifold. If $M$ has at least two ends, then there exists a weighted minimizing line in $M$.
\end{proposition}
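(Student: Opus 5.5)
The plan is the usual diagonal argument for producing a line from two ends, run with almost-minimizers in place of minimizers. The reason for this choice is that $L_u^\alpha$ is the length functional of the conformal metric $u^{2\alpha}g$, which need not be complete. So exact weighted minimizers between two points need not exist, just as in the lemma above, where a minimizing curve could break into rays and lines.

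\textbf{Setup and almost-minimizers.} Fix a compact set $K$ such that $M\setminus K$ has two distinct unbounded components $E_1,E_2$. Choose $p_i\in E_1$ and $q_i\in E_2$ with $d(p_i,K),d(q_i,K)\to\infty$. Let $\sigma_i$ be a piecewise smooth curve from $p_i$ to $q_i$ with
\[
L_u^\alpha(\sigma_i)\le L_u^\alpha(p_i,q_i)+\tfrac1i .
\]
Every such curve meets $K$, so we parametrize $\sigma_i$ by $g$-arclength with $\sigma_i(0)\in K$, defined on $[-a_i,b_i]$ where $a_i,b_i\ge d(p_i,K),d(q_i,K)\to\infty$. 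Any sub-arc of $\sigma_i$ is also $\tfrac1i$-almost minimizing between its endpoints. Indeed, replacing the sub-arc by a shorter competitor would decrease $L_u^\alpha(\sigma_i)$ below $L_u^\alpha(p_i,q_i)$.

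\textbf{Passing to the limit.} The $\sigma_i$ are $1$-Lipschitz and pass through the compact set $K$ at $s=0$. Arzel\`a--Ascoli and a diagonal argument give a subsequence converging locally uniformly to a $1$-Lipschitz curve $\gamma:\mathbb R\to M$. The weighted length is the length in the conformal metric $u^{2\alpha}g$, hence lower semicontinuous under uniform convergence. Also $L_u^\alpha(\cdot,\cdot)$ is continuous, being locally bounded by $(\sup u^\alpha)\,d$. Therefore, for $t_1<t_2$,
\[
L_u^\alpha(\gamma|_{[t_1,t_2]})\le\liminf_i L_u^\alpha(\sigma_i|_{[t_1,t_2]})\le \liminf_i\Big(L_u^\alpha(\sigma_i(t_1),\sigma_i(t_2))+\tfrac1i\Big)=L_u^\alpha(\gamma(t_1),\gamma(t_2)).
\]
So every arc of $\gamma$ realizes the weighted distance between its endpoints.

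\textbf{Main obstacle: infinite $g$-length in both directions.} The limit curve is only $1$-Lipschitz, so we must reparametrize it by $g$-arclength, and for this we need its $g$-length to be infinite in both directions. This is where I expect the real work. Suppose instead that $\gamma|_{[0,\infty)}$ had finite $g$-length. Then $\gamma(t)$ would converge to a point $x$, and for large $T<T'$ the arcs $\sigma_i|_{[T,T']}$ would have $g$-length $T'-T$ while both endpoints lie near $x$. I would derive a contradiction as follows.
\begin{enumerate}
\item Let $B$ be a small compact ball around $x$, and let $c=\min_{B'}u^\alpha>0$ on a slightly larger compact ball $B'\supset B$. A curve that leaves $B'$ and returns, or that stays in $B'$ for $g$-length $\ell$, has weighted length at least $c\cdot\min(\ell,\operatorname{dist}(B,\partial B'))$.
\item The weighted distance between the endpoints is at most $(\sup_{B'}u^\alpha)\,d\to0$.
\item Hence the almost-minimality with error $\tfrac1i$ fails once $T'-T$ is large and $i$ is large.
\end{enumerate}
This gives a quantitative bound $d(\gamma(0),\gamma(t))\ge c|t|-C$ locally, and the same argument works on the negative side. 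After reparametrizing $\gamma$ by $g$-arclength we obtain the weighted minimizing line. The hypothesis $\alpha<\frac4{n-1}$ plays no role in this argument; only positivity and continuity of $u$ are used.
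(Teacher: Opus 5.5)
Your proof is correct. Its skeleton matches the paper's: curves joining the two ends are forced through a fixed compact set, and one passes to a limit. The difference is in how you handle the possible incompleteness of $u^{2\alpha}g$.

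The paper modifies the weight. It replaces $u$ by $u_R$, which equals $u$ on $B_R(p)$ and is at least $\max\{1,u\}$ outside $B_{R+1}(p)$. This guarantees exact $L^\alpha_{u_R}$-minimizers $\gamma_R$ between the compact sets $\partial B_R(p)\cap E^1_{R_0}$ and $\partial B_R(p)\cap E^2_{R_0}$. These are geodesics of the $C^2$ metric $u^{2\alpha}g$ on $B_R(p)$, so they subconverge locally smoothly, and the paper keeps a component of the limit meeting $\overline{B_{R_0}(p)}$.

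You keep the original weight and work with $\tfrac1i$-almost minimizers between points. You anchor the $g$-arclength parametrizations in $K$. Minimality of the limit then comes from Arzel\`a--Ascoli, lower semicontinuity of length, and continuity of $L_u^\alpha(\cdot,\cdot)$.

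What your route buys:
\begin{itemize}
\item It needs no existence theory for minimizers.
\item It yields a single limit curve directly, with no need to select a component.
\item It actually proves that the limit has infinite $g$-length in both directions. The paper only asserts this; there it follows because the limit is an inextendible geodesic of $u^{2\alpha}g$ and $g$ is complete.
\end{itemize}

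What the paper's route buys is regularity for free. Your limit is a priori only Lipschitz. You should add the standard remark that a locally length-minimizing curve of the $C^2$ metric $u^{2\alpha}g$ is one of its geodesics, hence $C^2$. This matters wherever second variation is applied along the line.

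Two small points:
\begin{itemize}
\item The linear bound $d(\gamma(0),\gamma(t))\ge c|t|-C$ is not what your contradiction argument proves, and it is not needed. Your $c$ depends on a small ball around the hypothetical limit point. What you use, and what you proved, is that each half of $\gamma$ has infinite $g$-length.
\item As in the paper, you use completeness of $g$ implicitly. It is needed for compactness in Arzel\`a--Ascoli and for the convergence $\gamma(t)\to x$ in the finite-length case.
\end{itemize}
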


\begin{proof}
    Fix a point $p \in M$. Since $M$ has at least two ends, we can choose a sufficiently large $R_0$ such that $M \setminus B_{R_0}(p)$ has at least two unbounded components, denoted by $E_{R_0}^1,E_{R_0}^2$, respectively. 

    Now, consider a family of exhaustions $B_{R}(p)$ of $M$. Without loss of generality, we assume that $\partial B_R(p)$ is smooth. We also choose a smooth perturbation of the weighted function $u_R$ such that:
    \begin{itemize}
        \item $u_R = u$ in $B_{R}(p)$,
        \item $u_R \geq u$ on $M$,
        \item $u_R \geq  \max\{1, u\}$ in $M \setminus B_{R+1}(p)$.
    \end{itemize}
    Since $\partial B_{R}(p) \cap E_{R_0}^i$ is compact, there exists a curve $\gamma_R$ connecting $\partial B_{R}(p) \cap E_{R_0}^1$ and $\partial B_{R}(p) \cap E_{R_0}^2$ that minimizes the weighted length $L_{u_R}^\alpha$. Note that $\gamma_R \cap B_{R_0}(p) \neq \emptyset$ for all $R$ and a fixed large number $R_0$.

    Taking $R\to \infty$ and passing to a subsequence, $\gamma_R$ converges locally and smoothly to $\tilde{\gamma}$, which passes through the closure of $B_{R_0}(p)$.
	Each component of $\tilde{\gamma}$ is $L_u^\alpha$-minimizing and has infinite $g$-length.
	We then select one of the connected components of $\tilde{\gamma}$ that passes through the closure of $B_{R_0}(p)$. This is the desired weighted minimizing line.
\end{proof}

\section{Nonnegative spectral Ricci at infinity implies finite ends}\label{section3}

This section is devoted to proving the following result.
\begin{theorem}\label{classic-finite-ends}
    Let $M$ be an $n$-dimensional complete noncompact Riemannian manifold. Suppose $K$ is a compact subset of $M$ such that the spectral $\alpha$-Ricci curvature of $M$ is nonnegative on $M\setminus K$ for $\alpha<\frac{4}{n-1}$ and $M$ has inﬁnitely many ends. Then there exists a weighted minimizing geodesic line outside a larger compact subset $\tilde{K}$.
\end{theorem}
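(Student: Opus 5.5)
The plan is to rerun the proof of Proposition \ref{existence of weighted minimizing line} away from $K$. The new difficulty is that the weight $u$ is only defined (and only satisfies $-\alpha\Delta u+\Ric\,u\ge 0$) on $M\setminus K$. So the line must be produced inside the region where $u$ is defined, and it must not touch the region where $u$ has been artificially modified.

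\textbf{Localizing to one component.} Fix $p$ and $R_1$ with $K\subset B_{R_1}(p)$. The set $M\setminus \overline{B_{R_1}(p)}$ has only finitely many unbounded components, while $M$ has infinitely many ends. Hence some unbounded component $E$ contains infinitely many ends of $M$. For every $R_2>R_1$, $E\setminus B_{R_2}(p)$ then has at least two (in fact arbitrarily many) unbounded components.

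\textbf{Extending the weight.} Extend $u|_{M\setminus B_{R_1+1}}$ to a positive $C^2$ function $\hat u$ on $M$ with $\hat u=u$ outside $B_{R_1+1}(p)$ and $\hat u\ge \Lambda$ on $B_{R_1+1}(p)$, where $\Lambda$ is a large constant to be chosen. Since the spectral inequality only matters along the final line, we set $\tilde K=\overline{B_{R_1+1}(p)}$. The goal is a line $\gamma\subset M\setminus\tilde K$ that is $L^\alpha_{\hat u}$-minimizing in $M$. On $M\setminus\tilde K$ we have $L^\alpha_{\hat u}=L^\alpha_u$ along $\gamma$, which is what the later second variation argument needs.

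\textbf{Constructing the line.} Choose $R_2\gg R_1$ and two unbounded components $E^1,E^2$ of $E\setminus B_{R_2}(p)$. As in Proposition \ref{existence of weighted minimizing line}, for $R\to\infty$ take $L^\alpha_{\hat u_R}$-minimizing curves $\gamma_R$ joining $\partial B_R\cap E^1$ to $\partial B_R\cap E^2$, where $\hat u_R$ is the cutoff modification at scale $R$. Each $\gamma_R$ must cross the compact set $\overline{B_{R_2}}\setminus B_{R_1+1}$, because $E^1$ and $E^2$ are different components of $E\setminus B_{R_2}$. By Arzel\`a--Ascoli a subsequence converges locally to a limit whose components are $L^\alpha_{\hat u}$-minimizing. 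One component passes through the compact annulus $\overline{B_{R_2}}\setminus B_{R_1}$ and has infinite $g$-length in both directions, since the weights $\hat u_R\ge 1$ far out prevent escape with finite cost. This component is the candidate line.

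\textbf{Main obstacle: keeping the line off $\tilde K$.} I expect this to be the hardest step. I would try two things in order.
\begin{enumerate}
\item \emph{Penalty.} Since $E$ is connected and contains a path from $E^1$ to $E^2$ avoiding $\tilde K$, there is a competitor of fixed, $\Lambda$-independent cost $C_0$ in the compact piece. Any curve from the far region $E\setminus B_{R_2}$ that enters $B_{R_1+1}$ must traverse the collar $B_{R_1+1}\setminus B_{R_1+1/2}$ at least twice, so it costs at least $\Lambda$. Choosing $\Lambda>C_0$, together with a cut-and-paste comparison on each $\gamma_R$, should force every $\gamma_R$, and hence the limit, to avoid $B_{R_1+1/2}$.
\item \emph{Enlarging the radii.} If the comparison in (i) fails because the weighted cost along the far parts is not controlled, I would use the infinitely many ends inside $E$. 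Repeat the construction with nested radii $R_1<R_2<R_3<\cdots$, picking at each stage two ends of $E\setminus B_{R_{k+1}}$ lying in the same component of $E\setminus B_{R_k}$. The resulting minimizers can then be compared inside a region that avoids $B_{R_k}$, and a diagonal limit yields a line outside $B_{R_1+1}$.
\end{enumerate}

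The hypothesis $\alpha<\frac{4}{n-1}$ is not needed in this construction. It enters only afterwards, in the second variation argument along $\gamma$.
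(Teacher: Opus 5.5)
There is a genuine gap, and it is at exactly the step you flag as the main obstacle.

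\textbf{Why the penalty does not keep the line off $\tilde K$.} The weight you describe does not exist. A continuous $\hat u$ with $\hat u=u$ outside $B_{R_1+1}(p)$ and $\hat u\ge\Lambda$ on all of $B_{R_1+1}(p)$ would force $u\ge\Lambda$ on $\partial B_{R_1+1}(p)$. So any admissible $\hat u$ has a transition layer where $\hat u\neq u$, and there $-\alpha\Delta\hat u+\Ric\,\hat u\ge 0$ is simply unknown.

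Your penalty estimate never reaches this layer. The lower bound of order $\Lambda^\alpha$ applies only to curves that cross a collar of definite width inside $\{\hat u\ge\Lambda\}$. It therefore keeps $\gamma_R$ out of an inner ball (your $B_{R_1+1/2}$), not out of $\tilde K=\overline{B_{R_1+1}(p)}$. Nothing stops the minimizers, or their limit, from dipping into or running along the outer part of the modified region, for instance as a shortcut when $\partial E$ bulges. There the weight is not $u$, and the later second variation argument has no sign for $\Ric(\partial_s,\partial_s)-\alpha\Delta\hat u/\hat u$.

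Letting $\Lambda\to\infty$ does not help. The limit is then a line minimizing among curves that avoid the penalized set, and it may touch that set's boundary; ruling out such contact is the whole difficulty. Your option (ii) assumes that the minimizers avoid $B_{R_k}$, which is precisely what needs proof.

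\textbf{The missing idea.} You need a comparison that controls the cost of approaching the boundary of the good region intrinsically in terms of $u$, without modifying $u$ there. The paper does this as follows.
\begin{enumerate}
\item It works in a component $\tilde M$ of $M\setminus\Omega$ that has infinitely many ends.
\item It takes one free-boundary $L_u^\alpha$-minimizing ray $\gamma_i$ per end. Their footpoints on the compact set $\partial\tilde M$ accumulate at some $p$, so $\gamma_i\to\gamma_p$.
\item For $q_{-j}\in\gamma_i$ and $q_j\in\gamma_p$, any curve in $\tilde M$ joining them that touches $\partial\tilde M$ costs at least $L_u^\alpha(q_{-j},\partial\tilde M)+L_u^\alpha(q_j,\partial\tilde M)$. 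These distances are realized exactly along the free-boundary rays.
\item The competitor follows $\gamma_i$ and $\gamma_p$ only down to $\partial_{in}\tilde K$ and crosses between them by an arc of cost less than $\epsilon$. It is cheaper by roughly $2L_u^\alpha(\partial_{in}\tilde K,\partial\tilde M)-\epsilon>0$.
\item Run the same comparison with a point at small weighted distance $\delta$ from $\partial\tilde M$ in place of a contact point. This keeps the minimizers uniformly away from $\partial\tilde M$, so the limit line lies in the interior of the region where the weight is the original $u$.
\end{enumerate}

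A smaller point: when $E$ is a component of $M\setminus\overline{B_{R_1}(p)}$, the set $E\setminus\tilde K$ need not be connected. So your competitor path from $E^1$ to $E^2$ avoiding $\tilde K$ is not justified as stated; take $E$ to be a component of $M\setminus\tilde K$ instead.
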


    Let $M$ be a noncompact manifold, and let $\Omega \subset M$ be a compact subset such that $M \setminus \Omega$ has finitely many connected components. An \textit{end} $E$ of $M$ (with respect to $\Omega$) is an unbounded connected component of $M \setminus \Omega$.

    We say $M$ has \textit{finitely many ends} if there exists a compact subset $\Omega_0 \subset M$ such that, for every compact $\Omega \supset \Omega_0$, the number of unbounded components of $M \setminus \Omega$ remains the same. For any compact set $\Omega$, $M\setminus \Omega$ may contain bounded components but these are not ends. We remark here that we always choose compact set $\Omega$ such that components of $M\setminus\Omega$ have smooth boundary throughout this article.

\begin{lemma}
    There exists an unbounded component of $M\setminus \Omega$, denoted by $\tilde{M}$, and infinitely many $L_u^\alpha$-minimizing geodesic rays on $\tilde{M}$ that are perpendicular to $\partial\tilde{M}$.
\end{lemma}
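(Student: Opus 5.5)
Here $\Omega\supset K$ is a compact set with smooth boundary, and $u>0$ solves $-\alpha\Delta u+\Ric\,u=0$ on $M\setminus K$, hence on $M\setminus\Omega$. The plan has two parts: a pigeonhole step to locate $\tilde M$, then a limiting construction that produces one ray for each end of $\tilde M$.

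\textbf{Choice of $\tilde M$.} Since $\Omega$ is compact with smooth boundary, $\partial\Omega$ has finitely many components. Hence $M\setminus\Omega$ has only finitely many unbounded components. Because $M$ has infinitely many ends, for each $k$ there is a compact $\Omega_k\supset\Omega$ such that $M\setminus\Omega_k$ has at least $k$ unbounded components. Each of these lies in some unbounded component of $M\setminus\Omega$. By pigeonhole, one fixed unbounded component $\tilde M$ of $M\setminus\Omega$ contains at least $k_j\to\infty$ unbounded components of $M\setminus\Omega_{k_j}$ along a subsequence. Thus $\tilde M$ itself has infinitely many ends. Its boundary $\partial\tilde M$ is compact and smooth, and $u$ is positive and $C^{2,\beta}$ on $\tilde M$. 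This is the component in the statement.

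\textbf{Construction of the rays.} Fix $p\in\partial\tilde M$. For each end $E$ of $\tilde M$ (relative to some $\Omega_k$), pick points $q_i\in E$ with $d(p,q_i)\to\infty$. Minimize $L_u^\alpha$ over curves in $\tilde M$ joining $\partial\tilde M$ to $q_i$. Existence follows from the direct method. Curves have bounded weighted length, and $u$ is bounded below on compact sets, so the admissible curves stay in a bounded region on which $u\ge c>0$. This gives $g$-length bounds, so Arzel\`a--Ascoli and lower semicontinuity of $L_u^\alpha$ yield a minimizer $\sigma_i$. A minimizer with free initial endpoint on the boundary satisfies the first-variation transversality condition: the boundary term $u^\alpha\langle V,\partial_s\rangle$ must vanish for every $V$ tangent to $\partial\tilde M$. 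Hence $\sigma_i$ meets $\partial\tilde M$ orthogonally. Its interior may touch $\partial\tilde M$. To avoid this, I would treat it as an obstacle problem, or work in a slight enlargement of $\tilde M$, where $u$ still solves the equation because $K$ lies strictly inside $\Omega$.

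\textbf{Passing to the limit and counting.} Starting points lie on the compact set $\partial\tilde M$, and the curves are weighted geodesics, i.e.\ solutions of a smooth ODE. So a subsequence converges locally smoothly to a curve $\sigma_E$. Each segment of $\sigma_E$ is a limit of $L_u^\alpha$-minimizing segments, so $\sigma_E$ is an $L_u^\alpha$-minimizing ray, and it remains perpendicular to $\partial\tilde M$. Since $\sigma_i$ ends at $q_i\in E$, the limit $\sigma_E$ eventually lies in $E$. Rays in distinct ends are therefore distinct, which gives infinitely many rays.

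\textbf{Main obstacle.} The delicate point is compactness of the $\sigma_i$. Their $g$-length up to a fixed radius must be controlled without a global positive lower bound on $u$ at infinity. I would handle this as in \cref{existence of weighted minimizing line}: replace $u$ by a perturbation $u_R$ with $u_R\ge\max\{1,u\}$ outside $B_{R+1}(p)$, minimize with respect to $u_R$, and use that $u_R=u$ on $B_R(p)$. On each fixed ball, $u$ is bounded above and below, which gives uniform local length bounds. A diagonal argument as $R\to\infty$ then produces the limiting rays, each minimizing for $L_u^\alpha$.
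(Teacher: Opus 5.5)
Your choice of $\tilde M$ by pigeonhole is exactly the paper's first step. The paper then obtains the rays by quoting \cite[Proposition 4.1]{hong-wang-jfa}, which gives, for each end of $\tilde M$, a corresponding free boundary $L_u^\alpha$-minimizing ray. Your reconstruction of that second step has a genuine gap at the sentence ``since $\sigma_i$ ends at $q_i\in E$, the limit $\sigma_E$ eventually lies in $E$.'' The convergence $\sigma_i\to\sigma_E$ is only smooth on compact parameter intervals, so it says nothing about the tail of $\sigma_E$. Nothing you have written prevents $\sigma_i$ from first running deep into a different end $E'$, coming back through $\partial\Omega_k$, and only then heading to $q_i$. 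Minimality bounds the weighted cost of such an excursion only by the $L_u^\alpha$-distance between two points of $\partial\Omega_k$, which is a fixed constant. If $u$ is small far out in $E'$, an excursion of enormous $g$-length can be that cheap, even cheaper than any route through the core. Your device $u_R\ge\max\{1,u\}$ outside $B_{R+1}(p)$ merely truncates it at depth of order $R_i\to\infty$, and the locally smooth limit then runs into $E'$, not $E$. Hence $E\mapsto\sigma_E$ need not be injective, and you have not produced infinitely many distinct rays. That is the real content of the lemma, and it is exactly what is used later, where $\gamma_i$ and $\gamma_p$ must go into different ends so that the curves $\tau_j$ are forced through a compact set.

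To close this you need an ingredient that controls which end the limit enters. Possible routes:
\begin{itemize}
\item a uniform bound on the depth of excursions of the minimizers into ends other than $E$, which requires some lower control of $u$ on those ends;
\item a minimization constrained to $\Omega_k\cup E$, plus an argument that the constraint becomes inactive in the limit;
\item a counting argument excluding accumulation of the limits in finitely many ends.
\end{itemize}
You should also check that $\sigma_E$ is proper, since otherwise ``the end it lies in'' is meaningless. This holds: suppose $\sigma_E(t_k)$ lies in a compact set $C$ for times $t_k\to\infty$ spaced at least $2$ apart. Then $\int_0^{t_k}u^\alpha(\sigma_E(s))\,ds=L_u^\alpha(\partial\tilde M,\sigma_E(t_k))$ stays bounded, while each return costs at least $2\min u^\alpha$ over the closed $1$-neighbourhood of $C$.

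Two minor points. First, your initial existence argument (bounded weighted length confines the curves) is false when $u$ decays, though the $u_R$ device in your last paragraph repairs it. Second, no obstacle problem or enlargement is needed to keep the interior of $\sigma_i$ off $\partial\tilde M$. A minimizer from $\partial\tilde M$ that meets $\partial\tilde M$ again at a positive time would have a strictly cheaper final subarc. An enlargement would, moreover, make the rays orthogonal to the wrong hypersurface.
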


\begin{proof}
    Since the number of ends of $M$ is not bounded, there must exist at least one unbounded component of $M\setminus \Omega$ which has infinitely many ends; we denote it by $\tilde{M}$. Since all the ends of $\tilde{M}$ are interior ends, by \cite[Proposition 4.1]{hong-wang-jfa}, for each end of $\tilde{M}$, there is a corresponding free boundary $L_u^\alpha$-minimizing geodesic ray. 
\end{proof}

Each $L_u^\alpha$-minimizing geodesic ray $\gamma_i$ has an intersection point $p_i$ with $\partial \tilde{M}$. Because $\partial\tilde{M}$ is compact, there is a subsequence of $\{p_i\}$, still denoted by $\{p_i\}$, that converges to a fixed point $p\in \partial \tilde{M}$.  Theorem \ref{classic-finite-ends} follows from the following result.

\begin{lemma}
    For any compact subset $\tilde{K}\subset \tilde{M}$ containing $\partial\tilde{M}$, there exists an $L_u^\alpha$-minimizing line in the interior of $\tilde{M}\setminus \tilde{K}$.
    \label{lemma:exists-minimizing-line}
\end{lemma}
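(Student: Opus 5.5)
We construct the line as a limit of weighted minimizing segments joining points far out on two distinct rays $\gamma_i,\gamma_j$. The proof should then reduce to showing these segments stay away from $\tilde K$. Throughout, $u$ is the positive solution of $-\alpha\Delta u+\Ric\,u=0$ on $\tilde M$, and $L_u^\alpha$ is computed in $\tilde M$. The setting is $\tilde M$, the rays $\gamma_i$ (free boundary, $L_u^\alpha$-minimizing, perpendicular to $\partial\tilde M$), and the points $p_i=\gamma_i(0)\to p$.

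\textbf{Step 1 (choose the rays).} Fix $\tilde K$ and enlarge it to a compact set with smooth boundary. Only finitely many unbounded components of $\tilde M\setminus\tilde K$ lie in $\tilde M$ in a way relevant here, since $\tilde M\setminus\tilde K$ has finitely many components. There are infinitely many rays, each eventually lying in one unbounded component of $\tilde M\setminus\tilde K$, because a ray leaves every compact set. So we may pick indices $i\neq j$ whose rays eventually lie in distinct ends of $\tilde M$. Among the infinitely many ends of $\tilde M$, infinitely many rays go to pairwise distinct ends of $\tilde M$. Passing to a further subsequence, we may assume $\gamma_i,\gamma_j$ eventually enter the same component $E$ of $\tilde M\setminus \tilde K$ but distinct ends of $E$. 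This is possible because some component $E$ contains infinitely many ends, by pigeonhole.

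\textbf{Step 2 (segments and limit).} Choose a compact $K'\supset\tilde K$ such that $\gamma_i$ and $\gamma_j$ exit into different unbounded components $E^1,E^2$ of $E\setminus K'$. For $R$ large, minimize $L_u^\alpha$ among curves in $\overline{E}$ joining $\gamma_i(R)$ to $\gamma_j(R)$. As in the proof of Proposition \ref{existence of weighted minimizing line}, we may modify $u$ near infinity so that minimizers exist. Let $\sigma_R$ be such a minimizer. It must cross the compact set $\overline{E}\cap K'$, since $E^1$ and $E^2$ are separated there. Taking $R\to\infty$, Arzel\`a--Ascoli gives a subsequential limit through $K'$, and as in Proposition \ref{existence of weighted minimizing line} one component of it is a weighted minimizing line in $\overline E$.

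\textbf{Step 3 (the main obstacle: staying in the interior).} The limit might touch $\partial E\subset\partial\tilde K$, where it is only constrained rather than free. I would handle this by comparison with the rays, using that they are minimizing from $\partial\tilde M$. If $\sigma_R$ touched $\partial\tilde K$ at a point $x$, split $\sigma_R$ at $x$. Then
\[
L_u^\alpha(\sigma_R)\ \geq\ L_u^\alpha(\gamma_i(R),\partial\tilde K)+L_u^\alpha(\gamma_j(R),\partial\tilde K).
\]
I would seek a competitor with strictly smaller length that runs near the limit point $p$: first along $\gamma_i$ back towards $p_i$, then across the short gap $p_i\to p_j$ (of length $o(1)$ as $i,j\to\infty$), then out along $\gamma_j$. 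This does not immediately beat the bound above, so instead choose $\tilde K$-adapted rays. Replace $\gamma_i$ by free-boundary minimizing rays from $\partial E$ itself, using \cite[Proposition 4.1]{hong-wang-jfa} applied to $E$. Then use the convergence $p_i\to p$ in a rescaled argument to force a strict first-variation gain at $x$. This would show that $\sigma_R$ avoids $\partial\tilde K$ uniformly. Also, segments that hug $\partial\tilde K$ can be pushed inward because any touching point would give an interior tangency. I expect this step, ensuring the limiting line lies strictly inside $\tilde M\setminus\tilde K$, to be the genuinely delicate part. If direct comparison fails, I would exploit the freedom to choose $\tilde K$ larger and rays from ends far apart, so that the geodesic crossing $K'$ is pushed to infinity relative to $\partial\tilde K$.
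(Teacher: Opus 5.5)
There is a genuine gap, and it is the step you flagged yourself: Step 3 carries the whole content of the lemma, and none of the mechanisms you propose closes it.

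\begin{itemize}
\item The ``interior tangency'' argument needs some convexity of $\partial\tilde K$ (or $\partial E$) with respect to $L_u^\alpha$-geodesics. Without it, minimizers constrained to $\overline E$ can run along the boundary. The introduction points out that even mean convexity of such boundaries cannot be arranged by enlarging the compact set.
\item The ``rescaled first-variation gain'' is never specified, so it is not an argument.
\item The competitor you do write down goes back along $\gamma_i$ to $p_i$, across to $p_j$, and out along $\gamma_j$. It only achieves $L_u^\alpha(\gamma_i(R),\partial\tilde M)+L_u^\alpha(\gamma_j(R),\partial\tilde M)+o(1)$, so it cannot beat the lower bound.
\item Your lower bound is also mismatched: it measures distance to $\partial\tilde K$, while the rays $\gamma_i$ realize weighted distance to $\partial\tilde M$.
\end{itemize}

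The missing idea is to splice the two rays at a positive distance from the boundary, not at the boundary itself. In the paper this goes as follows.
\begin{itemize}
\item The $\gamma_i$ converge locally smoothly to a free boundary minimizing ray $\gamma_p$. So for $i$ large, the points where $\gamma_i$ and $\gamma_p$ cross the interior hypersurface $\partial_{in}\tilde K$ are joined by an arc of weighted length $<\epsilon$.
\item Set $\delta=L_u^\alpha(\partial_{in}\tilde K,\partial\tilde M)>0$. Take the curve that follows $\gamma_i$ from $q_{-j}$ down to $\partial_{in}\tilde K$, crosses by the short arc, and follows $\gamma_p$ out to $q_j$. Since each ray realizes the weighted distance to $\partial\tilde M$, this curve has length at most $L_u^\alpha(q_{-j},\partial\tilde M)+L_u^\alpha(q_j,\partial\tilde M)-2\delta+\epsilon$.
\item Any curve from $q_{-j}$ to $q_j$ meeting $\partial\tilde M$ has length at least $L_u^\alpha(q_{-j},\partial\tilde M)+L_u^\alpha(q_j,\partial\tilde M)$. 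This also holds for the modified weights, since $u_n\ge u$ everywhere and $u_n=u$ on the competitor.
\item Choosing $\epsilon<2\delta$ gives a margin independent of $j$ and $n$, and this margin survives the limit.
\end{itemize}

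Your Step 1 setup is compatible with this, and it is a reasonable way to make the line avoid $\tilde K$ rather than only $\partial\tilde M$ (which is what the paper's comparison directly controls). To complete it:
\begin{itemize}
\item work in a component $E$ of $\tilde M\setminus\tilde K$ with infinitely many ends;
\item use free boundary minimizing rays from $\partial E$, which also fixes the mismatch above;
\item take two of them close to an accumulation ray (or one of them together with the limit ray);
\item run the same splicing across a collar of $\partial E$.
\end{itemize}
With that inserted, your Steps 2--3 go through; without it, the proposal does not establish the lemma.
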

\begin{proof}
    Since $p$ is an accumulation point, there is a sequence of free boundary $L_{u}^\alpha$-minimizing rays $\gamma_{i}$ (corresponding to $p_i$) converging smoothly and locally to a free boundary $L_{u}^\alpha$-minimizing ray $\gamma_p$. Thus for any $\epsilon>0$ and any compact subset $\tilde{K}$ in $\tilde{M}$ containing $\partial \tilde{M}$, we can find $i=i(\epsilon,\tilde{K})\in \mathbb{N}$ such that $\gamma_i$ satisfies
    \begin{equation}\label{epsiloninequality}
        L_u^\alpha(\gamma_i\cap \partial_{in} \tilde{K},\gamma_p\cap \partial_{in} \tilde{K})<\epsilon
    \end{equation}
    where $\partial_{in} \tilde{K}$ denotes the boundary portion of $\partial \tilde{K}$ that lies in the interior of $\tilde{M}$.
    
    Let $q_{-j}$, $j\in \mathbb{Z}$,  be a sequence of points on $\gamma_i$ outside $\tilde{K}$ that diverge to infinity. Let $q_j$, $j\in \mathbb{Z}$, be a sequence of points on $\gamma_p$ outside $\tilde{K}$ that diverge to infinity. Denote $\tilde{M}_n:=\{x\in\tilde{M}:\operatorname{dist}(x,\partial\tilde{M})\leq n)\}$. 
    
    For fixed $j$, choose $n$ large enough such that $\tilde{M}_n$ contains $q_j$ and $q_{-j}$. We then choose a perturbation of the weighted function $u_n$ such that:
    \begin{itemize}
        \item $u_n = u$ in $\tilde{M}_n$,
        \item $u_n \geq u$ on $M$,
        \item $u_n > \max\{1, u\}$ in $M \setminus \tilde{M}_{n+1}$.
    \end{itemize}
    
    Let $\tau^n_j$ be the connected $L_{u_n}^\alpha$-minimizing geodesic curve in $\tilde{M}$ connecting $q_{-j}$ and $q_j$. We claim that for any $j\in \mathbb{Z}$, $\tau^n_j$ is smooth and is disjoint from $\partial \tilde{M}$. Indeed, if it meets $\partial \tilde{M}$, then
    \begin{equation}\label{triangleinequality}
        L_{u_n}^\alpha(\tau^n_j)\geq L_{u_n}^\alpha(q_{-j},\partial\tilde{M})+L_{u_n}^\alpha(q_{j},\partial\tilde{M}).
    \end{equation}
    Let $\gamma_i(q_{-j},\gamma_i\cap \partial_{in}\tilde{K})$ be the portion of $\gamma_i$ that connects $q_{-j}$ and  $\gamma_i\cap \partial_{in}\tilde{K}$. According to \eqref{epsiloninequality}, there exists a small arc $s_j$ connecting $\gamma_i\cap \partial_{in} \tilde{K}$ and $\gamma_p\cap \partial_{in} \tilde{K} $ such that 
    \[L_{u_n}^\alpha(s_j)<\epsilon.\]
    We can denote
    \[\tilde{\tau}^n_j=\gamma_i(q_{-j},\gamma_i\cap \partial_{in}\tilde{K}) \cup s_j \cup \gamma_p(\gamma_p\cap \partial_{in}\tilde{K}).\]
    Then \begin{align*}
        L_{u_n}^\alpha(\tilde{\tau}^n_j)<L_{u_n}^\alpha(q_{-j},\partial\tilde{M})+L_{u_n}^\alpha(q_{j},\partial\tilde{M})+\epsilon-2L_{u_n}^\alpha(\partial_{in}\tilde{K},\partial \tilde{M}).
    \end{align*}
    If $\epsilon<2L_{u_n}^\alpha(\partial_{in}\tilde{K},\partial \tilde{M})$ (which we ensure by choosing $\epsilon$ sufficiently small and $i$ sufficiently large), then by \eqref{triangleinequality} we obtain 
    \[L_{u_n}^\alpha(\tilde{\tau}^n_j)<L_{u_n}^\alpha(\tau^n_j)\]
    contradicting the minimality of $\tau_j^n$. Thus $\tau_j^n$ is connected smooth curve. By taking $n\rightarrow \infty,$ we obtain a smooth curve $\tau_j$ that is $L_u^\alpha$-minimizing. If there is a component of $\tau_j$ which is a line, we are done. Otherwise, since $\tau_j$ extends into two ends, it must intersect a compact subset. Thus letting $j$ go to infinity, we obtain a smooth limit $\tau$ that is $L_u^\alpha$-minimizing. Note there may be multiple components in $\tau$, but each of them is an $L_u^\alpha$-minimizing line. This completes the proof.
\end{proof}

We now start to prove the main theorem. First we show

\begin{proposition}\label{perturbation}
    There exists a positive function $u$ on $\tilde{M}$ such that 
    \[-\alpha\Delta u+\Ric u>0 \ \text{on}\ \tilde{M}\setminus \tilde{K}.\]
\end{proposition}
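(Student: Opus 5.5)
The idea is to perturb the spectral operator by a small positive, compactly supported potential on the region where nonnegativity might fail, and then use a positive solution of the perturbed equation. By hypothesis, $\lambda_1(-\alpha\Delta+\Ric)\ge 0$ on $M\setminus K$. Since $M$ has infinitely many ends, $\tilde M$ is noncompact, and we may enlarge $\Omega$ so that $K\subset\Omega$; then $\tilde M\subset M\setminus K$. The plan is to produce a positive $u$ on $\tilde M$ with strict inequality outside $\tilde K$. The strictness must survive the second variation argument along the line $\tau\subset\tilde M\setminus\tilde K$ supplied by Lemma \ref{lemma:exists-minimizing-line}. Since the line lies outside $\tilde K$, strictness is only needed there, and $\tilde K$ may be enlarged as we like.

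\textbf{Step 1: choose a positive potential.} Fix a smooth function $w>0$ on $\tilde M$ that decays fast at infinity, for instance $w\le \epsilon\,\rho^{-2}$ near infinity, or simply $w\in L^\infty$ with small weighted norm. We want the perturbed operator
\[
-\alpha\Delta+\Ric-w
\]
to still admit a positive solution on $\tilde M\setminus \tilde K$. This holds as long as $\lambda_1\bigl(-\alpha\Delta+\Ric-w\bigr)\ge 0$ on that region. Given such a solution $u$, we get $-\alpha\Delta u+\Ric\,u=wu>0$ there, which is exactly the claim.

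\textbf{Step 2: keep the perturbed spectrum nonnegative.} I expect this to be the main obstacle, since $\lambda_1(-\alpha\Delta+\Ric)$ may be exactly $0$ with no spectral gap. I would try the Fischer-Colbrie--Schoen / Li--Wang route, exhausting with Dirichlet or Neumann problems on $\tilde M_R\setminus\tilde K$ and normalizing at a point. A cleaner alternative: let $u_0>0$ solve $-\alpha\Delta u_0+\Ric\,u_0=0$ on $M\setminus K$, and set $u=u_0 e^{f}$ with $f$ chosen so that the extra terms are positive. Writing $\mathcal L=-\alpha\Delta+\Ric$, a direct computation gives
\[
\mathcal L(u_0e^f)=-\alpha e^f\bigl(u_0\Delta f+u_0|\nabla f|^2+2\langle\nabla u_0,\nabla f\rangle\bigr).
\]
So we need $\Delta f+|\nabla f|^2+2\langle\nabla\log u_0,\nabla f\rangle<0$ outside $\tilde K$. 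This is the same as asking $e^f$ to be strictly superharmonic for the weighted Laplacian $\Delta_{u_0^2}=\Delta+2\nabla\log u_0\cdot\nabla$.

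\textbf{Step 3: build $e^f$.} On $\tilde M\setminus\tilde K$, with several ends, take $v=e^f$ to be a bounded positive $\Delta_{u_0^2}$-superharmonic function. A natural candidate is $v=2-h$, where $h$ solves $\Delta_{u_0^2}h=\phi$ with $\phi\ge0$ a positive function and boundary data $0\le h\le 1$. This requires solvability with bounded $h$, that is, a nonparabolicity-type condition for the weighted measure $u_0^2\,dV$; the presence of infinitely many ends suggests that at least one end is weighted-nonparabolic. If solvability fails, I would fall back on the exhaustion argument of Step 2 with the potential $w$ chosen summable against the Green function. Finally, the Neumann condition on $\partial\tilde M$ is irrelevant, because strictness is only required away from $\tilde K\supset\partial\tilde M$.
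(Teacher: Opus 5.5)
Your reduction is sound. A $w>0$ with $\lambda_1(-\alpha\Delta+\Ric-w)\ge 0$ on $E:=\tilde M\setminus\tilde K$ proves the proposition, and so does, equivalently, a positive $v$ with $\Delta_{u_0^2}v<0$ on $E$, by your correct identity $-\alpha\Delta(u_0v)+\Ric\,u_0v=-\alpha u_0\Delta_{u_0^2}v$. The trouble is that the step you flag as the main obstacle is never carried out, and the mechanism you offer for it is wrong.

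\textbf{Step 1.} Your concrete choices fail when there is no spectral gap. Already on the end $\mathbb{R}^2\setminus B_e$ with $\Ric\equiv 0$ and $u_0\equiv 1$, no inequality $\int|\nabla\varphi|^2\ge\epsilon\int r^{-2}\varphi^2$ holds.

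\textbf{Step 3.} Nothing shows that some end is nonparabolic for $u_0^2\,dV$; the number of ends plays no role in this proposition. Even if one end were nonparabolic, it would not suffice. The line $\tau$ of Lemma \ref{lemma:exists-minimizing-line} may run through any ends of $E$, so strict inequality is needed on every component of $E$, parabolic or not. Conversely, nonparabolicity is not what is needed. On that same parabolic end, $v=2-1/\log r$ is bounded and positive and satisfies $\Delta v=-2/(r^2\log^3 r)<0$.

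\textbf{The missing idea.} $E$ is a proper subdomain of the connected region $U\supset\tilde M$ of $M\setminus K$ on which $\mathcal L=-\alpha\Delta+\Ric\ge 0$, and $U\setminus\overline{E}$ has nonempty interior. This alone makes $\mathcal L$ subcritical on $E$. Indeed, a null sequence for $\mathcal L$ supported in $E$ would also be a null sequence for $U$, and its limit (the positive ground state of $U$) would vanish on $U\setminus\overline{E}$. Equivalently, $\Delta_{u_0^2}$ has a finite positive Dirichlet Green function $G$ on $E$, whatever the geometry at infinity. The Dirichlet condition on $\partial E$ does the job you wanted nonparabolicity to do.

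\textbf{Completing the construction.} Take $\phi>0$ decaying fast enough that $\int_E G(x_0,y)\phi(y)u_0^2(y)\,dy<\infty$; this is then finite everywhere by Harnack. Set $v=1+G\phi$, so $\Delta_{u_0^2}v=-\phi$. Then $u=u_0v$ satisfies $-\alpha\Delta u+\Ric\,u=\alpha u_0\phi>0$ on $E$. Finally extend $u$ positively to $\tilde M$, running the construction on a slightly larger exterior domain first; only its values off $\tilde K$ enter the second variation argument.

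\textbf{Comparison with the paper.} Once this is inserted, your argument is a genuinely different, global route: a ground-state transform plus criticality theory, in the spirit of Catino--Mari--Mastrolia--Roncoroni. The paper instead imports the perturbation lemma of Antonelli--Pozzetta--Xu (their Lemma 2.2 and (2.2)). It only remarks that the proof there is local, so $\partial\tilde M$ does no harm.
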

\begin{proof}
    The proof is contained in \cite[Lemma 2.2]{antonelli-pozzetta-xu} and \cite[equation (2.2)]{antonelli-pozzetta-xu}. Since the proof is local, the boundary of $\tilde{M}$ will not affect the result.
\end{proof}

\begin{proof}[Proof of Theorem \ref{classic-finite-ends}]
Suppose $M$ has infinitely many ends. By Lemma \ref{lemma:exists-minimizing-line}, there exists a weighted minimizing line $\tau$ outside a compact set.
By the second variation formula of $L_u^\alpha$ (see e.g. Proposition 2.9 in \cite{hong-wang-jfa}), when $\alpha<\frac{4}{n-1}$, we have
 \begin{align}\label{the key inequality-line}
 c_1(n,\alpha,\varepsilon)\int_\tau \psi^2_s ds&\geq  c_2(n,\alpha,\varepsilon)\int_\tau u^{-2}\psi^2u_s^2 ds\nonumber\\
         &+\int_\tau\left(\operatorname{Ric}(\partial_s,\partial_s)-\alpha\frac{\Delta u}{u}\right)\psi^2 ds+\alpha\int_\tau\psi^2|\nabla^\perp\log u|^2 ds.
     \end{align}
     Plugging in the standard test function, we obtain $\operatorname{Ric}(\partial_s,\partial_s)-\alpha\frac{\Delta u}{u}=0$ along $\tau$, contradicting Proposition \ref{perturbation}.
\end{proof}

\end{document}